\documentclass{article}
\usepackage{amsmath}
\usepackage{amsfonts}
\usepackage{amssymb}
\usepackage{graphicx}
\usepackage{xcolor}

\usepackage[
  left=2.5cm,
  right=2.5cm,
  top=3cm,
  bottom=3cm
]{geometry}
\usepackage{hyperref}
\hypersetup{
    colorlinks=true, 
    linktoc=all,     
    linkcolor=blue,  
    urlcolor=blue,
}

\usepackage{color} 
\usepackage[normalem]{ulem} 
\usepackage{todonotes}      

\usepackage[nottoc]{tocbibind}

\newtheorem{theorem}{Theorem}[section]

\newtheorem{example}[theorem]{Example}

\newtheorem{lemma}[theorem]{Lemma}

\newenvironment{proof}[1][Proof]{\noindent\textbf{#1.} }{\ \rule{0.5em}{0.5em}}

\numberwithin{equation}{section}

\newcommand{\dd}{\mathrm{d}}

\def\Xint#1{\mathchoice
   {\XXint\displaystyle\textstyle{#1}}%
   {\XXint\textstyle\scriptstyle{#1}}%
   {\XXint\scriptstyle\scriptscriptstyle{#1}}%
   {\XXint\scriptscriptstyle\scriptscriptstyle{#1}}%
   \!\int}
\def\XXint#1#2#3{{\setbox0=\hbox{$#1{#2#3}{\int}$}
     \vcenter{\hbox{$#2#3$}}\kern-.5\wd0}}

\def\dashint{\Xint-}

\begin{document}

\title{Local Boundedness of Local Minimizers for a Class of Nonlinear Elliptic Systems with General Growth}
\author{Elvira Mascolo$^{1}$, Antonella Nastasi$^{2}$, Cintia Pacchiano
Camacho$^{3}$ \\
\medskip \\
{\normalsize $^{1}$Dipartimento di Matematica e Informatica
\textquotedblleft U. Dini\textquotedblright , Universit\`{a} di Firenze}\\
{\normalsize Viale Morgagni 67/A, 50134 - Firenze, Italy}\\
{\normalsize elvira.mascolo@unifi.it}\\
{\normalsize $^{2}$Department of Engineering, University of Palermo}\\
{\normalsize Viale delle Scienze, 90128 - Palermo, Italy}\\
{\normalsize antonella.nastasi@unipa.it}\\
{\normalsize $^{3}$ Instituto de Matem\'aticas, Unidad Cuernavaca}\\ {\normalsize Universidad Nacional Aut\'onoma de M\'exico}\\
{\normalsize Av. Universidad, 62210 - Cuernavaca, Morelos, Mexico}\\
{\normalsize cintia.pacchiano@im.unam.mx }}
\date{}
\maketitle

\begin{abstract}
In this paper, we prove the local boundedness of solutions to systems of partial differential equations in divergence form. More specifically, we consider systems that include the first variations of functionals depending on the spatial variable and exhibiting nonstandard growth with respect to the gradient, such as 
\[
\int_{\Omega} \left( 1+ h(|Du|)\right)
^{\alpha(x)} \,\dd x,
\]
where the convex function $h=h(t)$ does not satisfy the so-called 
$\Delta_2$ property and does not exhibit the conventional polynomial growth behavior.
\end{abstract}

\emph{Key words}: Non-uniform variational problems, Regularity of local
minimizers, Local boundedness,
 General growth conditions.
 
\emph{Mathematics Subject Classification (2020)}: Primary: 35D30, 35J15,
35J60, 49N60; Secondary: 35B45.

\tableofcontents

\section{Introduction}

Let $\Omega$ be an open set in $\mathbb{R}^n$, $n \ge 2$.  
Let $F = F(x,\xi)\geq 0$, $F : \mathbb{R} \times \mathbb{R}^{nN} \to \mathbb{R}$ be a convex function and 
\[
|\xi| := \left( \sum_{\alpha=1}^N \sum_{i=1}^n |\xi_{i}^{\alpha}|^2 \right)^{1/2}.
\]
We consider the functional
\begin{equation}\label{eq1.1}
\mathcal{F}(u) := \int_{\Omega} F(x,Du)\,\dd x,
\end{equation}
where $Du\in \mathbb{R}^{nN}$ denotes the gradient of a vector-valued function
$u : \Omega\subset\mathbb{R}^n\to \mathbb{R}^N$.\\

In the vector-valued case, as suggested by well known counterexamples by De Giorgi \cite{deg}, generally  a structure  depending on $|Du|$ on the integrand, are required for everywhere regularity. Then we assume that 
\begin{equation} \label{funzionale}
F(x,\xi) = g(x,|\xi|), \qquad \xi \in \mathbb{R}^{nN},
\end{equation}
 where
 \[
|Du(x)|
=
\left( \sum_{i=1}^N |\nabla u_i(x)|^2 \right)^{1/2}
=
\left( \sum_{i=1}^N \sum_{\alpha=1}^n 
\left| \frac{\partial u_i(x)}{\partial x_\alpha} \right|^2
\right)^{1/2}.
\]

Given a local minimizer $u$  it is relevant to  study the  properties of the energy function in order to establish the regularity of $u$, in the
interior of the domain.\\

In particular we investigate the following question: 
suppose that a local minimizer $u$ of integral functional $\mathcal{F}(u)$ in \eqref{funzionale} exists. Is such a minimizer bounded?

\vskip.3cm

The function $g: \Omega\times [0,+\infty) \to [0, +\infty)$ is a 
generalised Young function  in $\Omega$:
\begin{itemize}
	\item  [(a)] $g(\cdot,t)$ is a measurable function on $\Omega$, for 
	all $t\geq 0$;

\item [(b)] for almost every $x \in \Omega$, $g(x,\cdot)$ is a convex 
function on $[0,+\infty)$ such that $g(x,t)=0$ if and only if $t=0$, 
and
$$
\lim_{t\to +\infty}\frac{g(x,t)}{t}=+\infty\,;
$$
\item [(c)] for a.e. $x \in \Omega$, $g(x, \cdot)$ is differentiable in $[0,+\infty)$, and its derivative $g_t(x, t)$ is a
Caratheodory function in $\Omega \times [0,+\infty)$.
\end{itemize}

\medskip

In the case where $g(x,t)=g(t)$, then the Definition above 
corresponds to the usual definition of  Young function \cite{RaoRen, KrRu}.

\vskip.3cm

Let us define
$$
V= \left\lbrace  u \in W^{1,1}_{\rm{loc}}(\Omega,\mathbb{R}^N): \int_{\Omega} g(x,|Du|)\,\dd x < +\infty \right\rbrace 
$$

In the following we denote  \textbf{local minimizer} of \eqref{eq1.1} a  Sobolev function $u \in W^{1,1}_{\rm{loc}}(\Omega, \mathbb{R}^N)$ such that
\[
\mathcal{F}(u, \operatorname{supp}\varphi)
:=
\int_{\operatorname{supp}\varphi} g(x,|Du|)\,\dd x
\le
\int_{\operatorname{supp}\varphi} g(x,|Du + D\varphi|)\,\dd x
\]
for all $\varphi \in V$ with
$\operatorname{supp}\varphi \Subset \Omega$ and such that
$u+\varphi \in V$.\\

In this paper we consider a  general framework and
we establish the local boundedness of vectorial local minimizers for a 
specific class of integral functionals under appropriate structural assumptions on the integrand function $g=g(x,t)$  and on its derivatives with respect to $x$.
\vskip.3cm
We assume that 
\begin{equation}\label{ipotesi1}
\begin{cases} 
 &\mbox{for every $\lambda>1$ there exists $\sigma,\,p>1$ such that }\\& g(x,\lambda t) \le C \,\lambda^p g^\sigma(x,t),\,\, \mbox{for a.e.  $x\in\Omega$, for every $t \geq 0$,}
\end{cases}
\end{equation}
and
\begin{equation}\label{H_3'}
\begin{cases}&\mbox{for every $t \geq 0$ and for every $i = 1,..., n$, $g(\cdot, t)$ has weak derivatives $g_{x_i}(\cdot, t)$
 in $L^1_{\rm{loc}}(\Omega)$,}\\  &\mbox{which are Caratheodory functions in
 $\Omega \times [0,+\infty)$ and there exists a function $\gamma(x) \in L^s(\Omega)$ }\\ 
&\mbox{ with $s>pn$ and $\nu < \frac{s-n}{s(n-1)}<1$ such that}\\ 
& \hspace{3cm}|g_x(x,t)| \le C\, \gamma(x)  \, g(x,t)(1+g^{\nu}(x,t)),\,\, \mbox{for a.e.  $x\in\Omega$, for every $t \geq 0$,} \\ 
&\mbox{where $g_x$ denotes the weak gradient of $g$ with respect to $x$.}
\end{cases}\end{equation}

Of course it is not a
restriction to assume that 
$\gamma(x) \geq 1$ almost everywhere.

\vskip.3cm

A main novelty with respect to the existing literature on this subject is that we assume that g satisfies \eqref{ipotesi1}; consequently, the energy density is not of $\Delta_2$-class with respect to $t$. \\

We recall that a Young function $g$   satisfies the  $\Delta_2$ condition with respect to $t$ when for $\lambda>1$
$$g(x,\lambda t) \le C \,\lambda^m g (x,t), \quad m>1$$
 i.e. \eqref{ipotesi1} with $\sigma=1$, which implies 
 $$g(x,t) \le C \,t^m, \quad t\geq 1.$$
\vskip.3cm
For the theory of Young functions and the related Orlicz and Orlicz–Sobolev spaces, we refer to \cite{RaoRen} and \cite{Har-Hasto}. These books do not focus directly on  the regularity, but provide the functional-analytic framework needed for boundedness proofs.\\

We observe that when $g \in \Delta_2$, the Orlicz space behaves almost like $L^p$, therefore the Moser iteration procedure then proceeds in a way similar to the power-growth case.
For $g$  that does not satisfy the $\Delta_2$ condition, from the algebrical point of view  one cannot control
$g(x,t_1+t_2)$, and from functional framework the set of $u$ such that $g(x,u) \in L^1$ is not a vector space.
Moreover, in the absence of the $\Delta_2$-condition, even the derivation of the Euler equation requires additional assumptions, making the boundedness theory significantly more delicate.
\vskip.3cm

In this paper we aim to provide  a first answer concerning the boundedness regularity of minimizers without assuming the $\Delta_2$ property on the energy function.\\

We present the main result of the manuscript, which is a local estimate of the supremum
of $|u|$ in terms of the integral which defines functional $\mathcal{F}$.
\begin{theorem}\label{boundedness theorem}
Let $g$ satisfy hypotheses \eqref{ipotesi1} and \eqref{H_3'} with 
\begin{equation}\label{boundsigma}
 \sigma < 1^*\,\frac{s-p}{s}< 1^*:=\frac{n}{n-1}.
\end{equation}
Let $u$ be a local minimizer of $\mathcal{F}$ in
$V$ such that there exists $\tau>1$ with
\begin{equation}\label{gTau}
\int_{\Omega} g(x,\tau |Du|)\,\dd x < \infty .
\end{equation}
Then $u$ is locally bounded. Moreover, for all $\Omega_0$ open set compactly contained in $\Omega$, $x_0\in \Omega_0$, for every $\rho$, $R$ such that $0<\rho<R\leq \min\{\rm{dist}(x_0, \partial\Omega_0),1\}$ and for every $\alpha>1$, the following inequality holds
\begin{equation}\label{boundedness inequality}
\sup_{B_\rho}g(x,|u|)\leq c \left(\frac{1}{(R-\rho)^{\frac{ps(n-1)}{s-pn}}}\left(\int_{B_R}\bigl[1+g^{\frac{n}{n-1}}(x,|u|)\bigr]\,\dd x\right)^{\frac{n-1}{n}}\right)^{\alpha \sigma},
\end{equation}
where $c$ depends on $\alpha$, $\delta$, $p$, $n$, $s$, $\sigma$ and $\|\gamma\|_{L^s(\Omega_0)}$, with $B_\rho$ and $B_R$ are the balls with center $x_0$ and radii $\rho$ and $R$ respectively.
\end{theorem}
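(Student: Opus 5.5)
We would follow the Moser-type iteration used for functionals depending on $|Du|$ with $x$-dependent Young functions: first show that $w(x):=g(x,|u(x)|)$ is a weak subsolution of an elliptic inequality in divergence form, and then iterate a Sobolev inequality in $W^{1,1}$ with embedding exponent $1^*=\frac{n}{n-1}$.

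\textbf{Step 1: Caccioppoli-type inequality.} We test minimality with the radial perturbation $\varphi=-\eta^{k}\,\frac{\Phi(|u|)}{|u|}\,u$, where $\eta$ is a cutoff between $B_r$ and $B_{r'}$ and $\Phi$ is built from $g(x,|u|)$ raised to a power $\beta\ge 0$ and truncated at level $L$. This keeps $u+\varphi=(1-\eta^k\Phi/|u|)u$, which is a pointwise shrinking of $u$ in the direction $u/|u|$. Because $F$ depends only on $|\xi|$ (the structure \eqref{funzionale}), convexity yields an energy comparison without an Euler equation. This matters, since without $\Delta_2$ the Euler equation is not available; it is exactly where the radial structure replaces $\Delta_2$.

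The admissibility $u+\varphi\in V$ and the convexity estimate produce terms of the form $g(x,\lambda|Du|)$ with some $\lambda>1$. These are controlled through \eqref{ipotesi1} by $C\lambda^p g^\sigma(x,|Du|)$, and the assumption \eqref{gTau} with $\tau>1$ guarantees finiteness. The spatial dependence enters when we differentiate $g(x,|u|)$: the chain rule gives $D_x[g(x,|u|)]=g_x(x,|u|)+g_t(x,|u|)D|u|$. The first term is bounded by \eqref{H_3'} by $\gamma\, w(1+w^\nu)$. The outcome should be an estimate of the type
\[
\int_{B_{r}}|D(w^{\beta+1}\eta^k)|\,\dd x\le C(\beta+1)\int_{B_{r'}}\Bigl(\frac{1}{r'-r}+\gamma\Bigr)\,\bigl(1+w\bigr)^{\beta+1+\nu}\,\dd x .
\]

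\textbf{Step 2: Sobolev and Hölder.} We apply the $W^{1,1}$ Sobolev inequality to $w^{\beta+1}\eta^k$ and use Hölder with $\gamma\in L^s$ to absorb $\gamma$. This gives a reverse Hölder chain
\[
\|1+w\|_{L^{(\beta+1)1^*}(B_r)}^{\beta+1}\le \frac{C(\beta+1)\|\gamma\|_{L^s}}{(r'-r)^{a}}\,\|1+w\|^{(\beta+1+\nu)}_{L^{(\beta+1+\nu)s'}(B_{r'})}.
\]
The losses in this chain come from the extra power $\nu$ and from the exponent $\sigma$ produced by \eqref{ipotesi1}. The conditions $\nu<\frac{s-n}{s(n-1)}$ and $\sigma<1^*\frac{s-p}{s}$ in \eqref{boundsigma} are precisely what guarantee a net gain of integrability at each step. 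Concretely, the ratio of the new exponent to the old one stays above some $\theta>1$.

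\textbf{Step 3: Iteration.} We choose radii $r_i=\rho+(R-\rho)2^{-i}$ and exponents $q_{i+1}=\theta q_i$, starting from $q_0=1^*$ so that the initial quantity is $\int_{B_R}[1+g^{1^*}(x,|u|)]$. Iterating, the products of the constants converge, and the accumulated radius powers give the exponent $\frac{ps(n-1)}{s-pn}$. The exponent $\sigma$ and the deviation from linear scaling produce the outer power $\alpha\sigma$ with arbitrary $\alpha>1$, which is obtained by stopping the geometric sums suitably or by a final Young's inequality. Letting $L\to\infty$ then gives \eqref{boundedness inequality}.

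\textbf{Main obstacle.} We expect Step 1 to be the hardest part: making the test function admissible in $V$ without $\Delta_2$, and controlling the term $g(x,\lambda|Du|)$ through \eqref{ipotesi1} and \eqref{gTau}. Here the power $\sigma>1$ appears, and the exponent bookkeeping must close under \eqref{boundsigma}. If the pointwise convexity comparison fails, the first fallback is a difference-quotient version of the competitor, $u_\varepsilon=(1-\varepsilon\eta^k\Phi/|u|)u$ with $\varepsilon$ small. With $\varepsilon$ small the competitor stays within the $\tau$-margin given by \eqref{gTau}, and we would then pass to the limit $\varepsilon\to 0$ by monotone convergence.
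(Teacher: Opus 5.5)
Your Step 1 has a genuine gap, and everything else rests on it. You say the convexity comparison produces terms $g(x,\lambda|Du|)$ with $\lambda>1$, which you control by $C\lambda^p g^\sigma(x,|Du|)$ via \eqref{ipotesi1}, with \eqref{gTau} ensuring finiteness. This cannot close.

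First, bounding $g(x,\lambda|Du|)$ by $C\lambda^p g^\sigma(x,|Du|)$ replaces it with something larger, and \eqref{gTau} does not give $g^\sigma(x,|Du|)\in L^1$. For $g(t)=e^{(\ln(1+t))^m}$ one has $g^\sigma(t)/g(\tau t)\to\infty$.

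More fundamentally, a term like $g(x,\lambda|Du|)$ or $g^\sigma(x,|Du|)$ on the right cannot be reabsorbed into the left-hand side $\int\eta^k w^\beta g(x,|Du|)\,\dd x$ when $g\notin\Delta_2$. No later iteration removes it, whereas \eqref{boundedness inequality} contains no gradient on the right. The Caccioppoli inequality must therefore have a right-hand side depending on $u$ alone, with \eqref{ipotesi1} applied only to $g(x,\lambda|u|)$. This is also why your displayed Step 1 estimate is not what can be proved: the correct one carries $(R-\rho)^{-p}$, $\gamma^p$ and the exponent $\beta+\sigma+\nu p$.

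The paper gets there by first deriving the Euler--Lagrange system (Theorem \ref{Euler-LagrangeTheorem}). That is where \eqref{gTau} enters, so the Euler system \emph{is} available here. It then tests the system with $\frac{\delta}{M}u\,h_k(g(x,|\delta u|))\eta^p$. Every gradient term then has the form $g_t(x,|Du|)\,b$. The Young--Fenchel inequality $g_t(x,|Du|)\,b\le\varepsilon g^*(x,g_t(x,|Du|))+\varepsilon g(x,b/\varepsilon)$, combined with $g^*(x,g_t(x,t))\le t\,g_t(x,t)$, absorbs it into $\delta\int\eta^p g^\beta(x,|\delta u|)\,g_t(x,|Du|)|Du|\,\dd x$. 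What remains is only $g(x,\lambda|\delta u|)\le C\lambda^p g^\sigma(x,|\delta u|)$. The outer $\sigma$ in $\alpha\sigma$ comes from undoing the $\delta$-scaling with \eqref{ipotesi1} at the end.

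Your $\varepsilon\to0$ fallback is just the first variation, so it leads onto this route. But the conjugate-function absorption is exactly what your proposal lacks. A pure competitor argument can also work, but only through an exact splitting you do not describe. If $\psi=\Phi(|u|)/|u|$ is nondecreasing and $\eta^k\Phi'\le1$, then $|D(u+\varphi)|\le(1-\eta^k\psi)|Du|+|E|$, where $E$ collects the $u\otimes D\eta$ and $u\otimes\psi_x$ terms. Convexity with weight $\eta^k\psi$ then gives $\int\eta^k\psi\,g(x,|Du|)\,\dd x\le\int\eta^k\psi\,g\bigl(x,|E|/(\eta^k\psi)\bigr)\,\dd x$, whose right-hand side involves only $u$.

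Second, you never show that the quantities you iterate are finite. Admissibility of your competitor requires integrability of terms like $\gamma^p g^{\sigma+\nu p}(x,|u|)$. Step 3 starts from $\int_{B_R}[1+g^{1^*}(x,|u|)]\,\dd x$. Without $\Delta_2$, neither follows from $g(x,|Du|)\in L^1$ by standard embeddings. The paper supplies this with its embedding result, Theorem \ref{Immersion Theorem}, used as $g(x,|u|)\in L^{1^*}_{\rm loc}$. It is in the admissibility check of the test function that \eqref{boundsigma} enters, via $\sigma\frac{s}{s-p}\le 1^*$ and H\"older's inequality against $\omega^p\in L^{s/p}$. Without such a step you obtain at most an a priori inequality whose right-hand side may be infinite, not the local boundedness of $u$.
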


\vskip.3cm
The main ingredients in the proofs of boundedness are, as usual, the verification that the minimizer satisfies the Euler–Lagrange system, followed by an application of Moser iteration. However, under our assumptions on g, the derivation of the Euler–Lagrange system needs an integrability condition on the local minimizer $u$, namely \eqref{gTau}.  Moreover, we prove an embedding theorem showing that $g^{\frac{n}{n-1}}(x,|u|)\in L^1$.

It is worth observing that our results generalize the $\Delta_2$-case treated in [10], where assumption (1.6) is not necessary; see also [11] for related results in the scalar case.

\vskip.3cm
 In the last twenty years, many authors have studied the regularity problem for minimizers, obtaining important results concerning boundedness and Lipschitz continuity for minimizers of integral functionals whose growth conditions are governed by different lower and upper bounds for the integrand, both in the case of distinct exponents p and q, and in settings allowing for growth that is not necessarily of power type. We remark that in  our result the framework of a possible polynomial growth is no longer adopted.

\vskip.3cm
The early influential papers \cite{Mar91,Mar93} and \cite{Talenti} for boundedness properties  have been followed by various contributions (see \cite{CupMar, DBGV} and the references therein) and at the present time the body of research on the regularity theory for local minimizers of functionals with non-standard growth conditions is so vast that providing a complete list of references is unfeasible.

Therefore, we refer the interested reader to survey articles 
on the subject \cite{Mar21}, \cite{M06} and \cite{Mingione-Radulescu} and 
we review some recent contributions on the subject:
\cite { BarCiaMar2023, BaroniColMingione, CiaSc2024, CMM2018, CMMPDN, CLM2022, CMM2023, CMM24,deFilMin1,deFilMin2, Har-Ok, Hirs-Schaf, DiMarco-Marcellini 2020, Giannone, Mar-Pacch-Nast}. For the obstacle problem we recall \cite{derosa-grimaldi, eleuteri-pass1, eleuteri-pass2}.
\vskip.3cm
The paper is organized as follows. In Section \ref{sec:Hypotheses and statement of the main result}, we give some remarks and examples. In Section \ref{euler}, we present some results on the properties of convex functionals and we derive the Euler system. In Section  \ref{sec:Immersion Theorem}, we prove an embedding theorem. Finally, in Section \ref{sec:Proof of the main theorem}, we provide the proof of the main theorem.

\section{Some remarks and examples}\label{sec:Hypotheses and statement of the main result}
 We remark a consequence of the hypothesis \eqref{ipotesi1}. For fixed \(x\) and
for every \(\lambda>1\), by convexity of
\(g(x,\cdot)\) we have
\[
g_t(x,t)(\lambda t-t)
\leq g(x,\lambda t)-g(x,t)
\leq g(x,\lambda t).
\]
Therefore,
\[
(\lambda-1)t\,g_t(x,t)
\leq g(x,\lambda t).
\]
Using \eqref{ipotesi1}, we obtain
\[
(\lambda-1)t\,g_t(x,t)
\leq C\lambda^p g^\sigma(x,t).
\]
Hence
\[
t\,g_t(x,t)
\leq C\frac{\lambda^p}{\lambda-1}g^\sigma(x,t).
\]
In particular, choosing \(\lambda=2\), we get
\begin{equation}\label{oldH2}
t\,g_t(x,t)
\leq C g^\sigma(x,t)
\leq C\bigl(1+g^\sigma(x,t)\bigr).
\end{equation}

\noindent We now give some examples of functions not in $\Delta_2$ class,  which satisfies the hypotheses \eqref{ipotesi1} and \eqref{H_3'}.
\begin{example} We remark that in the present study, we are not assuming that our function $g$ is $\Delta_2$, as it is usually done in literature (see \cite{DaMa} and references therein). 
The following example function enters in the case study we are considering, but not in previous results in literature where the $\Delta_2$ condition is assumed.\\

\noindent Let
   $$g(t)=e^{(\ln (1+t))^m} \quad \mbox{for some $t,m>1$}$$
 We have that $g$ is convex and $$\frac{g_t(t)t}{g(t)} \to \infty, \quad \mbox{as $t \to \infty$}$$
   and
   $$\frac{g(\lambda t)}{g^\sigma (t)}\to 0,\quad \mbox{with $\lambda, \sigma>1$, as $t \to \infty$}$$
Thus, $g$ is not $\Delta_2$ but it clearly satisfies the hypotheses of the present study.

\end{example}

\begin{example} This next example concerns energies with variable growth exponent. It shows that our framework includes functionals of the type\[g(x,t)=h(t)^{\alpha(x)},\] where $h$ is an $N$-function satisfying \eqref{ipotesi1}  and $\alpha \in L^\infty(\Omega)$. Here, the exponent depends on the spatial variable and the growth is not necessarily polynomial nor of \(\Delta_2\)-type. In particular, the example highlights how condition \eqref{H_3'} can still be verified despite the presence of logarithmic terms arising from the differentiation with respect to the variable exponent \(\alpha(x)\).\\

\noindent Let
\[
g(x,t)=h(t)^{\alpha(x)},
\qquad
0<c_1\leq \alpha(x)\leq c_2,
\]
and assume that for some \(p>0\), \(\sigma\geq1\), and every \(\lambda\geq1\),
\[
h(\lambda t)\leq \lambda^p h(t)^\sigma .
\]
Then
\[
g(x,\lambda t)
=
h(\lambda t)^{\alpha(x)}
\leq
\bigl[\lambda^p h(t)^\sigma\bigr]^{\alpha(x)}
=
\lambda^{p\alpha(x)}
h(t)^{\sigma\alpha(x)}.
\]
Using the bound \(\alpha(x)\leq c_2\), we obtain
\[
g(x,\lambda t)
\leq
\lambda^{pc_2}
\bigl[h(t)^{\alpha(x)}\bigr]^\sigma
=
\lambda^{pc_2}g(x,t)^\sigma .
\]
Next,
\[
g_x(x,t)
=
h(t)^{\alpha(x)}
\alpha_x(x)\log h(t),
\]
hence
\[
|g_x(x,t)|
\leq
|\alpha_x(x)|\,g(x,t)\log(1+h(t)).
\]
Since
\[
\alpha(x)\geq c_1>0,
\]
we have
\[
\frac1{\alpha(x)}
\leq
\frac1{c_1},
\]
and therefore
\[
h(t)
=
\bigl(h(t)^{\alpha(x)}\bigr)^{1/\alpha(x)}
\leq
\bigl(h(t)^{\alpha(x)}\bigr)^{1/c_1}.
\]
Consequently,
\[
\log(1+h(t))
\leq
\log\!\left(
1+
\bigl(h(t)^{\alpha(x)}\bigr)^{1/c_1}
\right).
\]
Using the elementary estimate
\[
\log(1+s)\leq C_\delta (1+s)^\delta,
\qquad s\geq0,
\]
for every \(\delta>0\), we infer that
\[
\log(1+h(t))
\leq
C_\delta
\left(
1+
\bigl(h(t)^{\alpha(x)}\bigr)^{1/c_1}
\right)^\delta.
\]
If \(c_1\leq1\), then \(1/c_1\geq1\), and hence
\[
1+
\bigl(h(t)^{\alpha(x)}\bigr)^{1/c_1}
\leq
\left(
1+h(t)^{\alpha(x)}
\right)^{1/c_1}.
\]
Therefore,
\[
\log(1+h(t))
\leq
C_\delta
\left(
1+h(t)^{\alpha(x)}
\right)^{\delta/c_1}.
\]
On the other hand, if \(c_1>1\), then \(1/c_1<1\), and
\[
\bigl(h(t)^{\alpha(x)}\bigr)^{1/c_1}
\leq
1+h(t)^{\alpha(x)}.
\]
Thus,
\[
\log(1+h(t))
\leq
C_\delta
\left(
1+h(t)^{\alpha(x)}
\right)^\delta.
\]
Combining the two cases yields
\[
\log(1+h(t))
\leq
C_\delta
\left(
1+h(t)^{\alpha(x)}
\right)^{\delta\max\{1,1/c_1\}}.
\]
Since
\[
g(x,t)=h(t)^{\alpha(x)},
\]
we conclude that
\[
\log(1+h(t))
\leq
C_\delta
(1+g(x,t))^{\delta\max\{1,1/c_1\}}.
\]
Consequently,
\[
|g_x(x,t)|
\leq
C_\delta
|\alpha_x(x)|
g(x,t)
(1+g(x,t))^{\delta\max\{1,1/c_1\}}.
\]
Finally, since
\[
g(x,t)\leq 1+g(x,t),
\]
we arrive at
\[
|g_x(x,t)|
\leq
C_\delta
|\alpha_x(x)|
(1+g(x,t))^{1+\delta\max\{1,1/c_1\}}.
\]
\end{example}

\begin{example}\label{ah} 
This example shows that our assumptions are compatible with energies having spatially dependent coefficients. More precisely, we consider integrands of the form
\[
g(x,t)=a(x)h(t),
\]
where \(a:\Omega\to(0,\infty)\) is measurable and \(h:[0,\infty)\to[0,\infty)\).
Assume that there exists $c_1$ such that
\begin{equation}\label{a-limitata}
0<c_1\leq a(x)
\end{equation} 
 and for some \(p>0\), \(\sigma\geq1\), and every \(\lambda\geq1\),
\[
h(\lambda t)\leq \lambda^p h(t)^\sigma .
\]
Then
\[
g(x,\lambda t)
=
a(x)h(\lambda t)
\leq
\lambda^p a(x)h^\sigma(t).
\]
Since
\[
a(x)
=
a(x)^\sigma a(x)^{1-\sigma}
\leq
c_1^{1-\sigma}a(x)^\sigma,
\]
it follows that
\[
g(x,\lambda t)
\leq
\lambda^p c_1^{1-\sigma}
[a(x)h(t)]^\sigma
=
\lambda^p c_1^{1-\sigma}g(x,t)^\sigma .
\]
Moreover,
\[
g_x(x,t)=a_x(x)\,h(t),
\]
and therefore
\[
|g_x(x,t)|
=
\frac{|a_x(x)|}{a(x)}\,g(x,t)
\leq
\frac{|a_x(x)|}{c_1}\,g(x,t).
\]
Since
\[
g(x,t)\leq [1+g(x,t)]^{1+\nu},
\qquad \nu>0,
\]
we conclude that
\[
|g_x(x,t)|
\leq
\frac{|a_x(x)|}{c_1}
[1+g(x,t)]^{1+\nu}.
\]

\noindent We explicitly observe that, when $h$ belongs to the $\Delta_2$-class, $\sigma =1$, assumption \eqref{a-limitata} is not necessary.
\end{example}

In the sequel, we shall use the following lemma.

\begin{lemma}[Chain rule for bounded compositions]\label{Remark2.6DallAglioMascolo}
Let $B_R\Subset \Omega$ and let $v\in W^{1,1}(B_R)$ be nonnegative and bounded.
Assume that $g$ satisfies \eqref{ipotesi1} and \eqref{H_3'}. Then
\[
g(x,v)\in W^{1,1}(B_R),
\]
and
\[
D(g(x,v))=g_x(x,v)+g_t(x,v)\,Dv
\qquad\text{a.e.\ in }B_R.
\]
Moreover,
\[
|D(g(x,v))|
\le |g_x(x,v)|+g_t(x,v)|Dv|
\qquad\text{a.e.\ in }B_R.
\]
\end{lemma}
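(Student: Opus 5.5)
We establish the chain rule for $g(x,v)$, with $v\in W^{1,1}(B_R)$ nonnegative and bounded by $M$, by approximation. Everything on the range $[0,M]$ is controlled: by \eqref{H_3'}, $|g_x(x,t)|\le C\gamma(x)g(x,t)(1+g^\nu(x,t))$. By convexity and monotonicity of $g(x,\cdot)$ (from (b), $g\ge0$, $g(x,0)=0$), we have $g(x,t)\le g(x,M)$ for $t\le M$, and also $g_t(x,t)\le g_t(x,M)\le g(x,2M)/M$. Using \eqref{ipotesi1} with $\lambda=2$, $g(x,2M)\le C2^pg^\sigma(x,M)$. So the key quantity is $G_M(x):=g(x,M)$, and the first step is to show it is locally integrable (at least to the power needed). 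I would do this by noting that $t\mapsto g(x,t)$ is bounded on $[0,M]$ by $G_M$, and that $G_M$ is measurable by (a). Integrability of $G_M$ itself I expect to be handled as follows: for fixed $t=M$, $g(\cdot,M)\in W^{1,1}_{\rm loc}$ because its weak derivative is in $L^1_{\rm loc}$ by \eqref{H_3'}, and $g(\cdot,M)\in L^1_{\rm loc}$ is implicit in having weak derivatives. By Sobolev embedding it then even lies in $L^{1^*}_{\rm loc}$. Thus $g(x,v)$, $g_t(x,v)$, and $g_x(x,v)$ are dominated by functions built from $G_M$, $g_t(\cdot,M)$, and $\gamma(x)G_M(1+G_M^\nu)$. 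The last one is integrable by Hölder, since $\gamma\in L^s$ and $G_M^{1+\nu}$ lies in a dual exponent thanks to $\nu<\frac{s-n}{s(n-1)}$. This is exactly the role of that bound.

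Next, the chain rule itself. I would first prove it for $v$ smooth (or $v\in C^\infty$ approximations), and for $g$ mollified in $x$: set $g_\varepsilon(\cdot,t)=g(\cdot,t)*\rho_\varepsilon$. For $v_k\in C^\infty(\overline{B_R})$ with $0\le v_k\le M$ (truncate a standard approximation, which preserves convergence in $W^{1,1}$) and $v_k\to v$ in $W^{1,1}$ and a.e., the classical chain rule gives $D[g_\varepsilon(x,v_k)]=(g_\varepsilon)_x(x,v_k)+(g_\varepsilon)_t(x,v_k)Dv_k$. Alternatively, to avoid mollifying in $x$, one can use the characterization via absolutely continuous representatives on almost every line parallel to the coordinate axes. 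On a.e. such line, $x_i\mapsto g(x,t)$ is absolutely continuous for every rational $t$, and hence for all $t$ by continuity in $t$ together with the Carathéodory property of $g_{x_i}$. Likewise $v$ is absolutely continuous there, and the one-dimensional formula $\frac{d}{dx_i}g(x,v(x))=g_{x_i}(x,v)+g_t(x,v)\partial_iv$ follows from a direct difference-quotient computation. I would take this ACL route, since it avoids double limits.

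The main obstacle is justifying the one-dimensional chain rule for a function jointly depending on $x_i$ and $t$. Let $\phi(s,t):=g(\dots,s,\dots,t)$ on the line. Then $\phi(\cdot,t)$ is absolutely continuous with derivative $g_{x_i}$, and $\phi(s,\cdot)$ is $C^1$ with derivative $g_t$. Write
\[
\phi(s+h,v(s+h))-\phi(s,v(s))=[\phi(s+h,v(s+h))-\phi(s+h,v(s))]+[\phi(s+h,v(s))-\phi(s,v(s))].
\]
The first bracket is handled by the mean value theorem in $t$ together with the bound $g_t\le C g^\sigma(\cdot,M)/M$, integrable on the line. The second bracket equals $\int_s^{s+h} g_{x_i}(r,v(s))\,dr$, and one must show this is the integral of $g_{x_i}(r,v(r))$ up to $o(h)$. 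I would prove absolute continuity of $s\mapsto g(x,v)$ directly from the bound $|\phi(b,v(b))-\phi(a,v(a))|\le\int_a^b \sup_{t\le M}|g_{x_i}(r,t)|\,dr + \sup g_t\,|v(b)-v(a)|$, where the sup is dominated by $C\gamma\,G_M(1+G_M^\nu)$ from step one. Then I would identify the a.e. derivative at Lebesgue points using the Carathéodory continuity of $g_{x_i}$ in $t$. Finally, the pointwise formula together with the integrable dominations gives $g(x,v)\in W^{1,1}(B_R)$. The inequality $|D(g(x,v))|\le|g_x(x,v)|+g_t(x,v)|Dv|$ is then immediate from the triangle inequality, since $g_t\ge0$.
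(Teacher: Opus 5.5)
Your ACL route differs from the paper's. The paper approximates $v$ by smooth $v_j$ with $0\le v_j\le M+1$, applies a ``classical'' chain rule to $g(x,v_j)$, and passes to the limit. Your line-by-line argument actually deals with the mixed $x$/$t$ dependence, which that appeal to a classical chain rule glosses over, since $g$ is only weakly differentiable in $x$. Your Step 1 is also right: Sobolev for $g(\cdot,M)$, then H\"older with $\nu<\frac{s-n}{s(n-1)}$ to dominate $g_x(x,v)$.

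The proof does not close at its last sentence, however. To get $g(x,v)\in W^{1,1}(B_R)$ from the ACL formula you need $g_t(x,v)\,\partial_i v\in L^1(B_R)$. Your dominations only give $g_t(x,v)\le g(x,2M)/M\le C\,g^\sigma(x,M)$, a function you know to lie in $L^{1^*/\sigma}$, while $\partial_i v$ is merely in $L^1$; that product need not be integrable. The missing ingredient is $g(\cdot,2M)\in L^\infty$ near $\overline{B_R}$. This follows from the hypotheses by iterating your own Step 1. For $G:=g(\cdot,2M)$ one has $|DG|\le 2C\gamma(1+G^{1+\nu})$. So $G\in L^q_{\mathrm{loc}}$ gives $DG\in L^r_{\mathrm{loc}}$ with $\frac1r=\frac1s+\frac{1+\nu}{q}$, hence $G\in L^{r^*}_{\mathrm{loc}}$, with gain $\frac1q-\frac1{r^*}=\frac1n-\frac1s-\frac{\nu}{q}$. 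Starting from $q=1^*$, both $r>1$ and a positive gain are equivalent to $\nu<\frac{s-n}{s(n-1)}$. The gain increases with $q$, so after finitely many steps $r>n$ and Morrey's embedding gives local boundedness. The paper's proof passes over the same point: it bounds $g_t(x,v)\,v$ and $g_x(x,v)$, then asserts uniform integrability of $g_t(x,v_j)Dv_j$.

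There is also a smaller gap in identifying the derivative on a line. With your splitting, the first bracket over $h$ is $g_t(s+h,\xi_h)\,\frac{v(s+h)-v(s)}{h}$. To reach $g_t(s,v(s))\,v'(s)$ you need $g_t(s+h,\xi_h)\to g_t(s,v(s))$, i.e.\ continuity of $g_t$ in the spatial variable along the line. The Carath\'eodory hypothesis gives only measurability in $x$, an integrable bound on the line says nothing pointwise, and your Lebesgue-point argument treats only the $g_{x_i}$ term. The simplest repair is to split the other way:
\[
\phi(s+h,v(s+h))-\phi(s,v(s))=\bigl[\phi(s+h,v(s+h))-\phi(s,v(s+h))\bigr]+\bigl[\phi(s,v(s+h))-\phi(s,v(s))\bigr].
\]
Now the $t$-increment sits at the fixed point $s$, where $\phi(s,\cdot)$ is differentiable for a.e.\ $s$, and the ordinary chain rule gives $g_t(s,v(s))\,v'(s)$. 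The $x$-increment equals $\frac1h\int_s^{s+h}g_{x_i}(r,v(s+h))\,\dd r$; this uses your absolute continuity in $x_i$ for every $t$, not just rational $t$. Define $\omega(r,\varepsilon):=\sup\{|g_{x_i}(r,t)-g_{x_i}(r,t')|: t,t'\in[0,M],\ |t-t'|\le\varepsilon\}$, which is dominated by $2\sup_{t\le M}|g_{x_i}(r,t)|$. The $x$-increment then converges to $g_{x_i}(s,v(s))$ at every $s$ that is a common Lebesgue point of $g_{x_i}(\cdot,q)$ for $q\in\mathbb{Q}\cap[0,M]$ and of $\omega(\cdot,1/m)$ for $m\in\mathbb{N}$. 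Alternatively, keep your splitting and use that $\phi(s+h,\cdot)\to\phi(s,\cdot)$ pointwise as convex functions, which forces convergence of their derivatives at interior points of differentiability.
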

\begin{proof}
There exists $M>0$ such that $0\le v\le M$ a.e.\ in $B_R$.
By \eqref{oldH2} and  \eqref{H_3'}, the quantities $g_t(x,v)v$ and $g_x(x,v)$ are locally integrable.
Indeed,
\[
g_t(x,v)v\le C_\sigma g^\sigma(x,v),
\]
since $g(x,\cdot)$ is increasing,
\[
g^\sigma(x,v)\le g^\sigma(x,M).
\]
Similarly,
\[
|g_x(x,v)|
\le C_\nu \gamma(x) g(x,v)\bigl(1+g^\nu(x,v)\bigr)
\le C_\nu \gamma(x) g(x,M)\bigl(1+g^\nu(x,M)\bigr),
\]
which belongs to $L^1(B_R)$.
Now, we approximate $v$ in $W^{1,1}(B_R)$ by smooth functions $v_j$ with
$0\le v_j\le M+1$. For each $j$, the classical chain rule gives
\[
D(g(x,v_j))=g_x(x,v_j)+g_t(x,v_j)Dv_j.
\]
The right-hand side is uniformly integrable by the previous estimates, so
$\{g(x,v_j)\}_j$ is bounded in $W^{1,1}(B_R)$.
Passing to the limit, we conclude that $g(x,v)\in W^{1,1}(B_R)$ and
\[
D(g(x,v))=g_x(x,v)+g_t(x,v)Dv
\qquad\textrm{a.e. in }B_R.
\]\end{proof}

\section{The Euler-Lagrange system}\label{euler}

We recall some properties of the convex function.
\begin{lemma}\label{convex1}
Let \(G:\mathbb{R}^{nN}\to[0,\infty)\) be a convex functional of class \(C^1\).
Assume that
\[
G(Du)\in L^1(\Omega)
\qquad\text{and}\qquad
G(Du+2D\varphi)\in L^1(\Omega).
\]
Then there exists a function \(h\in L^1(\Omega)\), independent of \(t\), such that
\[
\left|
\sum_{i=1}^n\sum_{\alpha=1}^N
G_i^\alpha(Du+tD\varphi)\,
\varphi^\alpha_{x_i}
\right|
\le h(x)
\]
for all \(|t|\le 1\).
\end{lemma}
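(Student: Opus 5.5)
The plan is to reduce everything to a one-variable convexity argument, applied pointwise in $x$. Fix $x\in\Omega$ and set
\[
f(t):=G\bigl(Du(x)+tD\varphi(x)\bigr),\qquad t\in\mathbb{R}.
\]
Since $G$ is convex, nonnegative and $C^1$, the function $f$ is convex, nonnegative and $C^1$, with
\[
f'(t)=\sum_{i=1}^n\sum_{\alpha=1}^N G_i^\alpha(Du+tD\varphi)\,\varphi^\alpha_{x_i}.
\]
So the quantity to be bounded is exactly $|f'(t)|$ for $|t|\le1$. Because $f'$ is nondecreasing, it suffices to bound $f'(1)$ from above and $f'(-1)$ from below by functions of $x$ that are integrable, independently of $t$.

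\textbf{Upper bound.} This uses only the two stated hypotheses. By monotonicity of $f'$ and the tangent-line inequality at $t=1$,
\[
f'(t)\le f'(1)\le f(2)-f(1)\le f(2)=G(Du+2D\varphi).
\]
Moreover, $f(1)\le\tfrac12\bigl(f(0)+f(2)\bigr)$, so every value $f(t)$ with $t\in[0,2]$ is dominated by $G(Du)+G(Du+2D\varphi)\in L^1(\Omega)$. The candidate dominating function is therefore
\[
h:=c\bigl(G(Du)+G(Du+2D\varphi)\bigr)
\]
for a suitable constant $c$, provided the lower side can be handled.

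\textbf{Lower bound: the main obstacle.} The only available bound from convexity is $f'(t)\ge f(t)-f(t-1)$, and for $t\in[-1,1]$ this requires control of $f$ on $[-2,1]$. The hypotheses control $f$ only at $0$ and $2$ (hence on $[0,2]$). A one-dimensional model shows that nonnegativity alone does not close the gap: a smoothed $\max\{0,-Kt\}$ has $f(0)$, $f(2)$ small while $f'(0)\approx -K/2$. So the lower bound genuinely needs more than the two integrability assumptions.

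What I would try first is to exploit the structure $G(\xi)=g(x,|\xi|)$ used in the paper. Because $g(x,\cdot)$ is increasing, the radial structure gives
\[
G_i^\alpha(\xi)\,\eta_i^\alpha=\frac{g_t(x,|\xi|)}{|\xi|}\,\xi\cdot\eta ,
\qquad\text{hence}\qquad |f'(t)|\le g_t(x,|\xi|)\,|D\varphi|,
\]
with $\xi=Du+tD\varphi$. Then I would bound $g_t(x,|\xi|)\,|D\varphi|$ by $g(x,2|\xi|)$ and by $g\bigl(x,2|D\varphi|\bigr)$, splitting according to whether $|D\varphi|\le|\xi|$ or not. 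The resulting terms would be controlled through the integrability \eqref{gTau} and convexity in the modulus, with \eqref{oldH2} supplying the bound on $t\,g_t$.

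If that is not available at the level of generality of the lemma, the honest fallback is to restrict to $t\in[0,1]$, which is all that the one-sided derivative of $\mathcal{F}(u+t\varphi)$ at $t=0^+$ needs. The full $|t|\le1$ statement would then be obtained by applying the same argument to $-\varphi$, which requires the symmetric hypothesis $G(Du-2D\varphi)\in L^1(\Omega)$. On $[0,1]$, the lower bound $f'(t)\ge f'(0)$ reduces to controlling $f'(0)$ from below, and this is exactly the point I expect to be the hardest to justify from the stated assumptions alone.
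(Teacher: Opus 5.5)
Your proposal does not prove the lemma, but no proof exists: the statement is false, and the lower-side obstruction you isolate is exactly the reason.

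Your one-dimensional model becomes a counterexample once its scale varies with $x$. Take $G(\xi)=\bigl(\max\{0,-\xi^1_1\}\bigr)^2$, which is convex, $C^1$ and nonnegative. Let $\Omega=(-1,1)^n$, $u\equiv0$, and $\varphi=(2\eta\sqrt{x_1^+},0,\dots,0)$, where $\eta\in C^\infty_c(\Omega)$, $0\le\eta\le1$ and $\eta\equiv1$ near $0$. Then $G(Du)=0$, and $G(Du+2D\varphi)$ is bounded, since the negative part of $\varphi^1_{x_1}$ is at most $2|D\eta|$. Note also that $u\equiv0$ even minimizes $\int_\Omega G(Dv)\,\dd x$. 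Yet near the origin, for $x_1>0$, the quantity at $t=-1$ equals $\bigl|G^1_1(-D\varphi)\,\varphi^1_{x_1}\bigr|=2/x_1$. This is not integrable, so no admissible $h$ exists.

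The radial structure does not rescue the lemma when $g\notin\Delta_2$. For $G(\xi)=g(|\xi|)$ and $\varphi=-u$ one has $G(Du+2D\varphi)=G(Du)$. In your notation $f(t)=G(Du+tD\varphi)$, however, $|f'(-1)|=g'(2|Du|)\,|Du|\ge g(2|Du|)-g(|Du|)$. So the lemma fails for any $u$ with $g(|Du|)\in L^1$ but $g(2|Du|)\notin L^1$.

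The paper's proof gets past this point only through an invalid step. From $G(Du)\le\frac12G(Du-2D\varphi)+\frac12G(Du+2D\varphi)$ it infers $G(Du-2D\varphi)\le2G(Du)+2G(Du+2D\varphi)$. But that inequality only rearranges to the lower bound $G(Du-2D\varphi)\ge2G(Du)-G(Du+2D\varphi)$. Convexity bounds a point lying between two others from above, but bounds an outer point only from below. The steps up to \eqref{secondbound} are correct, and \eqref{secondbound} needs $f$ on $[-2,0]$, just as you observed.

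What your proposal lacks is the decisive conclusion. You should have turned your model into a counterexample and corrected the hypothesis, rather than importing assumptions the lemma does not have or leaving $f'(0)$ as an open difficulty. With the symmetric hypothesis $G(Du-2D\varphi)\in L^1(\Omega)$ from your fallback, your monotonicity argument settles all $|t|\le1$ at once, $f'(0)$ included:
\[
-G(Du-2D\varphi)\le f(-1)-f(-2)\le f'(-1)\le f'(t)\le f'(1)\le f(2)-f(1)\le G(Du+2D\varphi).
\]
So $h=G(Du+2D\varphi)+G(Du-2D\varphi)$ works, and the hypothesis $G(Du)\in L^1$ becomes redundant.

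Your radial route is the other valid repair, and it is what the paper's Euler--Lagrange theorem actually needs. It does, however, use \eqref{gTau} and extra integrability of $D\varphi$, which are not hypotheses of the lemma. Run it with $g_t(x,r)\,r'\le g(x,r+r')$ for $r,r'\ge0$, rather than your split at factor $2$; that split doubles $|Du|$ and is not controlled by \eqref{gTau} when $\tau$ is close to $1$. By \eqref{moon} this gives
\[
|f'(t)|\le g(x,|Du|+2|D\varphi|)\le\frac1\tau g(x,\tau|Du|)+\frac{\tau-1}{\tau}g\bigl(x,\tfrac{2\tau}{\tau-1}|D\varphi|\bigr).
\]
The right-hand side is integrable once $\varphi\in V$ is replaced by $\frac{\tau-1}{2\tau}\varphi$. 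This replacement is harmless, since the system is linear in $\varphi$.
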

\begin{proof}
By the convexity of \(G\), for every \(\xi,\xi_0\in\mathbb{R}^{nN}\),
\begin{equation}\label{convexityineq1}
G(\xi)-G(\xi_0)
\ge
D_\xi G(\xi_0)\cdot(\xi-\xi_0).
\end{equation}
Replacing \(\xi\) by \(2\xi_0-\xi\), we also obtain
\[
G(2\xi_0-\xi)-G(\xi_0)
\ge
D_\xi G(\xi_0)\cdot(\xi_0-\xi),
\]
or equivalently,
\[
G(\xi_0)-G(2\xi_0-\xi)
\le
D_\xi G(\xi_0)\cdot(\xi-\xi_0).
\]
Combining this estimate with \eqref{convexityineq1}, we get
\[
G(\xi_0)-G(2\xi_0-\xi)
\le
D_\xi G(\xi_0)\cdot(\xi-\xi_0)
\le
G(\xi)-G(\xi_0).
\]
Therefore,
\[
\bigl|
D_\xi G(\xi_0)\cdot(\xi-\xi_0)
\bigr|
\le
|G(\xi)-G(\xi_0)|
+
|G(\xi_0)-G(2\xi_0-\xi)|.
\]
Since \(G\ge0\), it follows that
\begin{equation}\label{basicconvexbound}
\bigl|
D_\xi G(\xi_0)\cdot(\xi-\xi_0)
\bigr|
\le
2G(\xi_0)+G(\xi)+G(2\xi_0-\xi).
\end{equation}
We now choose
\[
\xi_0=Du+tD\varphi,
\qquad
\xi=\xi_0+D\varphi.
\]
Then
\[
\xi-\xi_0=D\varphi,
\qquad
2\xi_0-\xi=Du+(t-1)D\varphi,
\qquad
\xi=Du+(t+1)D\varphi.
\]
Thus, from \eqref{basicconvexbound},
\begin{align}
\left|
\sum_{i=1}^n\sum_{\alpha=1}^N
G_i^\alpha(Du+tD\varphi)\,
\varphi^\alpha_{x_i}
\right|
&\le
2G(Du+tD\varphi)
+
G(Du+(t-1)D\varphi)
\nonumber\\
&\qquad
+
G(Du+(t+1)D\varphi).
\label{firstbound}
\end{align}
Since
\[
Du+tD\varphi
=
\frac12\bigl(Du+(t-1)D\varphi\bigr)
+
\frac12\bigl(Du+(t+1)D\varphi\bigr),
\]
convexity gives
\[
G(Du+tD\varphi)
\le
\frac12G(Du+(t-1)D\varphi)
+
\frac12G(Du+(t+1)D\varphi).
\]
Substituting this estimate into \eqref{firstbound}, we obtain
\begin{equation}\label{secondbound}
\left|
\sum_{i=1}^n\sum_{\alpha=1}^N
G_i^\alpha(Du+tD\varphi)\,
\varphi^\alpha_{x_i}
\right|
\le
2G(Du+(t-1)D\varphi)
+
2G(Du+(t+1)D\varphi).
\end{equation}
Let \(|t|\le1\). Then \(t-1,t+1\in[-2,2]\).
We claim that, for every \(\lambda\in[-2,2]\),
\begin{equation}\label{lambdaestimate}
G(Du+\lambda D\varphi)
\le
2G(Du)+2G(Du+2D\varphi).
\end{equation}
Indeed, by convexity,
\[
G(Du+\lambda D\varphi)
\le
\frac{2-\lambda}{4}G(Du-2D\varphi)
+
\frac{2+\lambda}{4}G(Du+2D\varphi).
\]
Moreover,
\[
G(Du)
=
G\left(
\frac12(Du-2D\varphi)+\frac12(Du+2D\varphi)
\right)
\le
\frac12G(Du-2D\varphi)
+
\frac12G(Du+2D\varphi).
\]
Hence, using \(G\ge0\),
\[
G(Du-2D\varphi)
\le
2G(Du)+2G(Du+2D\varphi).
\]
Substituting this into the previous inequality and using again that \(G\ge0\), we obtain \eqref{lambdaestimate}.
Applying \eqref{lambdaestimate} with \(\lambda=t-1\) and \(\lambda=t+1\) in \eqref{secondbound}, we get
\[
\left|
\sum_{i=1}^n\sum_{\alpha=1}^N
G_i^\alpha(Du+tD\varphi)\,
\varphi^\alpha_{x_i}
\right|
\le
8G(Du)+8G(Du+2D\varphi).
\]
Therefore, the desired estimate holds with
\[
h(x):=
8G(Du(x))+8G(Du(x)+2D\varphi(x)).
\]
By assumption, \(h\in L^1(\Omega)\). This completes the proof.
\end{proof}

For general properties on convex functions used in the present paper, we refer the reader to the book by Rao-Ren \cite{RaoRen}. 


\begin{theorem}[Euler--Lagrange system]\label{Euler-LagrangeTheorem}
Let $u$ be a local minimizer of $\mathcal{F}$ such that there exists $\tau>1$ with
\begin{equation}\label{nuova-ipotesi}
\int_{\Omega}g(x, \tau|Du|)\,\dd x < \infty .
\end{equation}
Then $u$ satisfies the Euler--Lagrange system
\begin{equation}\label{eq1.2}
\int_{\Omega} \sum_{i=1}^n F_{\xi^{\alpha}_i}(x,Du)\, \varphi^{\alpha}_{x_i}\,\dd x = 0,
\qquad \alpha = 1,\dots,N,
\end{equation}
for every $\varphi = (\varphi^{\alpha})_{\alpha=1}^N
\in V$
with $\operatorname{supp}\varphi \Subset \Omega$.
\end{theorem}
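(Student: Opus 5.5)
The argument is the standard variational one: differentiate $t\mapsto \mathcal F(u+t\varphi,\operatorname{supp}\varphi)$ at $t=0$ and justify passing the derivative under the integral by dominated convergence, using Lemma \ref{convex1} for the domination. The only delicate point is to make sure all the competitors $u+t\varphi$ have finite energy, so that minimality can be used and Lemma \ref{convex1} applies. This is exactly where \eqref{nuova-ipotesi} enters, since without $\Delta_2$ we cannot control $g(x,|Du+2D\varphi|)$ from $g(x,|Du|)$ alone.

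\textbf{Step 1 (reduction to admissible directions).} Fix $\varphi$ with $\operatorname{supp}\varphi\Subset\Omega$ and $K:=\operatorname{supp}\varphi$. I first prove \eqref{eq1.2} for $\varphi$ small, i.e.\ such that
\[
\int_K g\bigl(x,\tfrac{2\tau}{\tau-1}|D\varphi|\bigr)\,\dd x<\infty .
\]
Writing $Du+2sD\varphi=\frac1\tau(\tau Du)+\frac{\tau-1}{\tau}\bigl(\frac{2s\tau}{\tau-1}D\varphi\bigr)$ for $|s|\le 1$, convexity and monotonicity of $g(x,\cdot)$ give
\[
g(x,|Du+2sD\varphi|)\le \tfrac1\tau g(x,\tau|Du|)+\tfrac{\tau-1}{\tau}g\bigl(x,\tfrac{2\tau}{\tau-1}|D\varphi|\bigr),
\]
which is integrable on $K$. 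Hence $u+t\varphi$ is admissible for all $|t|\le 2$, and $G(\xi):=g(x,|\xi|)$ satisfies the assumptions of Lemma \ref{convex1} on $K$. Here the $x$-dependence is harmless, since the lemma is pointwise in $\xi$. $C^1$ regularity in $\xi$ holds away from $\xi=0$ by (c), and at $0$ because $g_t(x,0)=0$ is forced by $g\ge0$, $g(x,0)=0$ and differentiability.

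\textbf{Step 2 (differentiation).} Let $\Phi(t)=\int_K g(x,|Du+tD\varphi|)\,\dd x$. By local minimality $\Phi(t)\ge\Phi(0)$ for $|t|\le1$. The difference quotients equal $\int_0^1 D_\xi G(Du+stD\varphi)\cdot D\varphi\,\dd s$ by the mean value theorem, and by Lemma \ref{convex1} these are dominated by $h\in L^1(K)$. Dominated convergence then gives $\Phi'(0)=\int_K D_\xi G(Du)\cdot D\varphi\,\dd x$. Since $0$ is an interior minimum, $\Phi'(0)=0$, which is \eqref{eq1.2} (outside $K$ the integrand vanishes). Taking $\varphi$ with only the component $\alpha$ nonzero separates the $N$ equations.

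\textbf{Step 3 (general $\varphi$ and main obstacle).} For a general $\varphi\in V$ with compact support, the condition in Step 1 may fail. I would use linearity of the left side of \eqref{eq1.2} in $\varphi$. If $\varphi$ is bounded with bounded gradient, $g(x,\tfrac{2\tau}{\tau-1}|D\varphi|)$ is bounded on $K$ by monotonicity, provided $g(\cdot,M)\in L^1_{\rm loc}$ as used in Lemma \ref{Remark2.6DallAglioMascolo}. So the identity holds for Lipschitz test functions.

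For general $\varphi\in V$ I would approximate with $\varphi_\varepsilon=\varepsilon\varphi$ when $\int g(x,c|D\varphi|)<\infty$, or otherwise with truncations/mollifications $\varphi_k\to\varphi$. One then passes to the limit in $\int D_\xi G(Du)\cdot D\varphi_k$ using the bound $|D_\xi G(Du)|=g_t(x,|Du|)$. Via the Young-type inequality $g_t(x,a)b\le g(x,\tau a)/(\tau-1)+\ldots$, this is controlled by $g(x,\tau|Du|)$ plus a term in $g(x,\cdot\,|D\varphi_k|)$; the latter needs modular convergence of the approximants. This density/approximation step is where I expect the main difficulty. If it fails in full generality, I would state the identity for test functions $\varphi$ with $g(x,c|D\varphi|)\in L^1$ for some $c>2\tau/(\tau-1)$, which covers the Lipschitz test functions used later in the Moser iteration.
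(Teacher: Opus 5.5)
\textbf{Gap: Step 3 is left open, but it closes in one line.} Your Steps 1--2 coincide with the paper's argument: the splitting $Du+2sD\varphi=\frac1\tau(\tau Du)+\frac{\tau-1}{\tau}\bigl(\frac{2s\tau}{\tau-1}D\varphi\bigr)$, then Lemma~\ref{convex1} for domination and differentiation under the integral sign. In Step 3, however, you leave the general case open, speculate about a density argument, and fall back to a strictly weaker statement.

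What you overlooked is that $\varphi\in V$ \emph{means} $\int_\Omega g(x,|D\varphi|)\,\dd x<\infty$. So your condition ``$\int g(x,c|D\varphi|)<\infty$'' holds with $c=1$ for every admissible $\varphi$, and the ``otherwise'' case never occurs. Set $\phi:=\frac{\tau-1}{2\tau}\varphi$. Then $\frac{2\tau}{\tau-1}|D\phi|=|D\varphi|$, so $\phi$ satisfies exactly your Step 1 condition. Steps 1--2 give $\int_\Omega D_\xi G(Du)\cdot D\phi\,\dd x=0$. Since this expression is linear in the test function, dividing by $\frac{\tau-1}{2\tau}$ yields \eqref{eq1.2} for $\varphi$. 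This is precisely the paper's proof.

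Passing from $\varepsilon\varphi$ to $\varphi$ is not an approximation: no limit is taken, so no modular convergence is needed. You can therefore drop both the truncation/mollification route, which would be genuinely delicate without $\Delta_2$ and with $x$-dependent $g$, and the Lipschitz detour, which needs $g(\cdot,M)\in L^1_{\mathrm{loc}}$, not among the hypotheses.

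\textbf{Smaller point: differentiability at $\xi=0$.} From $g\ge 0$ and $g(x,0)=0$ you only get $g_t(x,0)\ge 0$, not $g_t(x,0)=0$. For example, $g(t)=t+t^2$ satisfies (a)--(c), yet $\xi\mapsto g(|\xi|)$ is not differentiable at $\xi=0$. Differentiability of $F(x,\cdot)$ at the origin is therefore an implicit assumption, not something derivable from (a)--(c). The statement presupposes it by writing $F_{\xi^\alpha_i}$, and Lemma~\ref{convex1} requires $C^1$.
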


\medskip

\begin{proof}
Let $u$ be a local minimizer of $\mathcal{F}$.
Then for every $\varphi$ such that $\operatorname{supp}\varphi \Subset \Omega$
and $u+t\varphi \in V$, we have
\[
\left.\frac{d}{dt}\mathcal{F}(x,u+t\varphi)\right|_{t=0} = 0.
\]

\noindent Define
\[
\psi(x,t) := F(x,Du(x)+tD\varphi(x)).
\]
By Lemma \ref{convex1}, for sufficiently small $t$, there exists a function $h \in L^1(\Omega)$ such that
\begin{equation}\label{eq1.3}
|\psi_t(x,t)|
=
\left|
\sum_{i=1}^n \sum_{\alpha=1}^N
F^{\alpha}_i(x,Du(x)+tD\varphi(x))\, \varphi^{\alpha}_{x_i}(x)
\right|
\le h(x).
\end{equation}

\noindent By the Leibniz integral rule we obtain
\[
\frac{\dd}{\dd t}\mathcal{F}(u+t\varphi)
=
\frac{\dd}{\dd t}\int_{\Omega} F(x,Du+tD\varphi)\,\dd x
=
\int_{\Omega} \frac{\dd}{\dd t} F(x,Du+tD\varphi)\,\dd x
=
\int_{\Omega} \psi_t(x,t)\,\dd x .
\]
Hence, for $t=0$ we obtain \eqref{eq1.2}.

\noindent Now we prove \eqref{eq1.3}.
\noindent By standard calculations due to the convexity of $F$, we get
\begin{equation}\label{1.4}
\left|
\sum_{i=1}^n \sum_{\alpha=1}^N
F^{\alpha}_i(x,\xi_0)\,(\xi^{\alpha}_i-\xi^{\alpha}_{0,i})
\right|
\lesssim
2F(x,Du+(t+1)D\varphi)
+
2F(x,Du+2D\varphi).
\end{equation}

\medskip
\noindent Let us consider $\phi=\frac{\tau -1}{2\tau} \varphi$, since $\varphi \in V$ also $\phi \in V$.\\

\noindent By the convexity of $g$  we have
\begin{equation}\label{moon}
g(x,t_1+t_2)
\le
\frac1{\tau} g(x,\tau t_1)
+
\frac{\tau-1}{\tau}
g\!\left(x,\frac{\tau}{\tau-1}t_2\right),
\qquad \text{for every } \tau>1,
\end{equation}

\noindent by evaluating \eqref{moon} with $t_1=|Du|$ and $t_2=2|D\phi|$,
we explicitly obtain
\[
g(x,|Du|+2|D\phi|)
\le
\frac1{\tau} g(x,\tau|Du|)
+
\frac{\tau-1}{\tau}
g\!\left(x,\frac{\tau}{\tau-1}2|D\phi|\right)
=
\frac1{\tau} g(x,\tau|Du|)
+
\frac{\tau-1}{\tau}
g\!\left(x,\frac{2\tau}{\tau-1}|D\phi|\right).
\]

\noindent By the hypothesis \eqref{gTau}, we have that the first term on the right-hand side is integrable. Moreover,  the right-hand side of \eqref{1.4}
is locally integrable, in fact
\begin{equation}\label{1.5}
\int_{\Omega}
g\!\left(x,\frac{2\tau}{\tau-1}|D\phi|\right)\,\dd x
< \infty
\end{equation}

\noindent We have

\begin{equation}\label{eq-eulero2}
\sum_{i,\alpha}\int_{\Omega}  F_{\xi^{\alpha}_i}(x,Du)\, \left(\pm\frac{\tau-1}{2\tau}\varphi^{\alpha}_{x_i}\right)\,\dd x = 0,
\qquad \alpha = 1,\dots,N, 
\end{equation}
and then

\begin{equation}\label{eq-eulero3}
\frac{\tau-1}{2\tau} \sum_{i,\alpha}\int_{\Omega}  F_{\xi^{\alpha}_i}(x,Du)\, \varphi^{\alpha}_{x_i}\,\dd x = 0,
\qquad \alpha = 1,\dots,N, 
\end{equation}
Thus, \eqref{eq1.2} holds for every $\varphi \in V$.
\end{proof}

%
%
%

\section{Immersion Theorem}\label{sec:Immersion Theorem}
For the proof of the  of the manuscript (Theorem \eqref{boundedness theorem}), we need to verify that the right hand side of the boundedness inequality \eqref{boundedness inequality} is
finite. To this aim, we prove the following immersion Theorem.
\begin{theorem}\label{Immersion Theorem}
    Let $f(x,\xi)=g(x,|\xi|)$. Assume that $g$ satisfies  hypotheses \eqref{ipotesi1}-\eqref{H_3'}. Let $u\in W^{1,1}_{\rm{loc}}(\Omega, \mathbb{R}^N)$ such that $g(x,|Du|)\in L^1_{\rm{loc}}(\Omega)$. Then for all open sets $\Omega_0 \Subset \Omega$ there exists $R_0>0$, depending on  $n$, $\sigma$, $s$ and $\Vert \gamma\Vert_{L^s(\Omega_0)}$, such that for every $x_0\in \Omega_0$, for every $R$ such that $0<R\leq \min\{\rm{dist}(x_0, \partial\Omega_0),R_0\}$, the following inequality holds:
    \begin{equation}\label{14DallAglioMascolo}
    \left(\int_{B_R}g^{\frac{n}{n-1}}(x, |u-u_R|)\, \dd x\right)^{\frac{n-1}{n}}  \leq C \int_{B_R}g(x,|Du|)\,\dd x,
    \end{equation}
    where $u_R= \dashint_{B_R} u \,\dd x$.\\
    
\noindent As a consequence, $$g^{\sigma}(x,|u|)\in L^1_{\rm{loc}}(\Omega) \quad \mbox{with $\sigma<\frac{n}{n-1}.$}$$
\end{theorem}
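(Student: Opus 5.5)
We apply the $L^1$-Sobolev--Poincaré inequality on balls to the scalar function $w:=g(x,|u-u_R|)$, in the form
\[
\Bigl(\int_{B_R}|w|^{\frac{n}{n-1}}\,\dd x\Bigr)^{\frac{n-1}{n}}\le C(n)\Bigl(\int_{B_R}|Dw|\,\dd x+R^{-1}\int_{B_R}|w|\,\dd x\Bigr).
\]
We then estimate the right-hand side with Lemma \ref{Remark2.6DallAglioMascolo}, \eqref{oldH2} and \eqref{H_3'}. Since the Lemma needs bounded functions, we first work with the truncations $v_k:=\min\{|u-u_R|,k\}$, which are nonnegative, bounded and belong to $W^{1,1}(B_R)$ with $|Dv_k|\le|Du|$. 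We let $k\to\infty$ at the end by monotone convergence; all constants must therefore be independent of $k$.

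By Lemma \ref{Remark2.6DallAglioMascolo},
\[
|D g(x,v_k)|\le C\gamma(x)\,g(x,v_k)\bigl(1+g^\nu(x,v_k)\bigr)+g_t(x,v_k)\,|Du|.
\]
\textbf{Gradient term.} We use the Young-type inequality $ab\le g(x,a)+g^*(x,b)$ with $a=|Du|$ and $b=g_t(x,v_k)$. Convexity gives $g^*(x,g_t(x,t))=t\,g_t(x,t)-g(x,t)\le t\,g_t(x,t)$, and then \eqref{oldH2} gives $g^*(x,g_t(x,v_k))\le C g^\sigma(x,v_k)$. Hence
\[
g_t(x,v_k)|Du|\le g(x,|Du|)+C\,g^{\sigma}(x,v_k).
\]
\textbf{$x$-derivative term.} By Hölder with $\gamma\in L^s$,
\[
\int_{B_R}\gamma\, g^{1+\nu}(x,v_k)\,\dd x\le \|\gamma\|_{L^s}\Bigl(\int_{B_R} g^{(1+\nu)s'}(x,v_k)\,\dd x\Bigr)^{1/s'},
\]
and the same for the term without $\nu$. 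The restriction $\nu<\frac{s-n}{s(n-1)}$ is exactly what makes $(1+\nu)s'<\frac{n}{n-1}$. So interpolating between $L^1$ and $L^{n/(n-1)}$ (plus Young), this term is bounded by $\varepsilon\|w_k\|_{L^{n/(n-1)}}$ plus a power of $\int g$, with a small factor $R^{\theta}\|\gamma\|_{L^s}$ and $\theta>0$. Likewise, since $\sigma<\frac{n}{n-1}$, the term $\int g^\sigma(x,v_k)$ is controlled by $|B_R|^{\theta'}\|w_k\|_{L^{n/(n-1)}}^\sigma$. Choosing $R\le R_0$ small (this is where $R_0$ depends on $n,\sigma,s,\|\gamma\|_{L^s(\Omega_0)}$), we absorb these contributions into the left-hand side, which is finite since $v_k$ is bounded.

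\textbf{Main obstacle.} Two points remain after absorption. First, the leftover terms are superlinear in $w_k$ (powers $\sigma$ and $1+\nu$), so absorption only works in a smallness regime. Second, a zero-order term $\int g(x,v_k)$ remains. I would handle this in the following order.
\begin{enumerate}
\item Control $\int g(x,|u-u_R|)$ by $\int g(x,|Du|)$ via a Poincaré inequality for the convex $g$ (Jensen plus the mean-value representation), with $R\le 1$ and the $x$-dependence controlled again via \eqref{H_3'}.
\item Run a continuity/bootstrap argument in $k$: as $k$ increases from $0$, the quantity $\|w_k\|_{L^{n/(n-1)}}$ is continuous in $k$, so it stays in the absorbable regime, up to normalizing by $\int_{B_R}g(x,|Du|)$, which is small for $R$ small by absolute continuity.
\end{enumerate}
The absorption step is the main obstacle: the superlinear powers rule out a scale-free estimate, so some smallness of $R$ or of the energy is genuinely needed.

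Finally, passing $k\to\infty$ yields \eqref{14DallAglioMascolo}. For the consequence, $g(x,|u|)\le \frac12 g(x,2|u-u_R|)+\frac12 g(x,2|u_R|)$ by convexity. Applying \eqref{ipotesi1} with $\lambda=2$ to the first term, together with \eqref{14DallAglioMascolo} and Hölder (using $\sigma<\frac{n}{n-1}$ to fit the power $\sigma$ inside $L^{n/(n-1)}$), gives $g^{\sigma}(x,|u|)\in L^1(B_R)$. A covering of compact subsets by such balls then gives $g^{\sigma}(x,|u|)\in L^1_{\rm loc}(\Omega)$.
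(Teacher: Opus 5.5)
\textbf{Verdict: genuine gap.} You have identified the right obstacles (the superlinear powers $\sigma$ and $1+\nu$, and the zero-order term), but your two repair steps do not remove them. No argument can: with a constant independent of $u$, \eqref{14DallAglioMascolo} fails under \eqref{ipotesi1}--\eqref{H_3'} as soon as $\nu>0$.

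Step 2 cannot give the statement as posed. Write $E=\int_{B_R}g(x,|Du|)\,\dd x$. Absorbing $CR^{\theta'}X^{\sigma}$ and $CR^{\theta}\|\gamma\|_{L^s}X^{1+\nu}$ requires $R^{\theta'}E^{\sigma-1}$ and $R^{\theta}E^{\nu}$ to be small. So your $R_0$ depends on $u$, not only on $n,\sigma,s,\|\gamma\|_{L^s(\Omega_0)}$.

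The decisive failure is step 1. The Jensen/Riesz-potential proof of a modular Poincar\'e inequality needs $g(x,t)\le K\,g(y,t)$ for $x,y\in B_R$, uniformly in $t$. But \eqref{H_3'} only gives $|D_x\log(1+g^{-\nu}(x,t))|\le C\nu\gamma(x)$. That is additive control of $g^{-\nu}$, and it says nothing about $g(x,t)/g(y,t)$ for large $t$.

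Here is a counterexample. Take $N=1$, let $q$ be smooth with values in $[3,4]$ and $q(x)=4-|x-x_0|^2$ near $x_0$, and set
\[
g(x,t)=t^2+\bigl((t-1)_+\bigr)^{q(x)} .
\]
\begin{itemize}
\item[(i)] It satisfies $g(x,\lambda t)\le C\lambda^4 g(x,t)$ for $\lambda\ge1$. Hence \eqref{ipotesi1} holds, even in the form $C\lambda^p[1+g^\sigma]$ actually used in the paper, for any $\sigma\ge1$.
\item[(ii)] It satisfies $|g_x|\le C_\nu|Dq|\,g\,(1+g^\nu)$ for every $\nu>0$. For $1<t\le2$ one has $(t-1)^{q}|\log(t-1)|\le1\le g$, and for $t\ge2$ one has $\log(t-1)\le C_\nu g^\nu$.
\item[(iii)] Given $R$, put $r=R/4$ and $u_\lambda=\lambda v$, with $v=1$ on $B_r(x_0)$, $v=0$ off $B_{2r}(x_0)$, $|Dv|\le 1/r$.
\item[(iv)] Then $v_R\le 2^{-n}$. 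On $B_{r/2}(x_0)$, where $q\ge 4-r^2/4$, this gives $g(x,|u_\lambda-(u_\lambda)_R|)\ge(\frac34\lambda-1)^{4-r^2/4}$ for large $\lambda$.
\item[(v)] $Dv$ lives where $q\le 4-r^2$, so $\int_{B_R}g(x,|Du_\lambda|)\,\dd x\le |B_{2r}|\bigl(\lambda^2r^{-2}+(\lambda/r)^{4-r^2}\bigr)$.
\end{itemize}
The ratio of the two sides of \eqref{14DallAglioMascolo} therefore grows like $\lambda^{3r^2/4}$, on every ball centred at $x_0$. This is the familiar failure of modular Poincar\'e inequalities for variable exponents. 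Note that this $g$ is even $\Delta_2$, so the failure comes from $\nu>0$, not from the lack of $\Delta_2$.

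\textbf{Comparison with the paper.} The paper's proof is the first half of your plan: truncation, Lemma \ref{Remark2.6DallAglioMascolo}, Young--Fenchel with \eqref{oldH2}, H\"older against $\gamma\in L^s$, absorption for $R\le R_0$. It reaches a linear absorption only through three errors:
\begin{itemize}
\item[(i)] It writes the H\"older bounds for $\int_{B_R}g^\sigma(x,w)$ and $\int_{B_R}\gamma\,g^{1+\nu}(x,w)$ as first powers of $\|g(x,w)\|_{L^{n/(n-1)}}$, instead of the powers $\sigma$ and $1+\nu$.
\item[(ii)] It uses the Sobolev inequality on $B_R$ without the factor $R^{-1}$ on the zero-order term. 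You correctly kept that factor, and it makes the term non-absorbable.
\item[(iii)] It discards $c(n)\int_{B_R}\gamma\,\dd x$.
\end{itemize}
So the points you flagged are exactly where the published argument breaks.

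\textbf{What survives.} The case $\nu=0$ goes through (e.g.\ Example \ref{ah}, or $g$ independent of $x$), and there your step 1 gives a clean proof with no bootstrap.
\begin{itemize}
\item[(i)] Morrey's inequality for $\log g(\cdot,t)$ gives $g(x,t)\le e^{C\|\gamma\|_{L^s}R^{1-n/s}}g(y,t)$ on $B_R$. Hence $\int_{B_R}g\bigl(x,|u-u_R|/(cR)\bigr)\,\dd x\le C\int_{B_R}g(x,|Du|)\,\dd x$.
\item[(ii)] Bound the gradient term by $g^*(x,g_t(x,w))\le w\,g_t(x,w)\le g(x,2w)$, which is just convexity, instead of by $Cg^\sigma(x,w)$.
\item[(iii)] The modular Poincar\'e inequality from (i) then controls both $\int_{B_R}g(x,2w)$ and $R^{-1}\int_{B_R}g(x,w)$ once $2cR\le1$.
\item[(iv)] The $g_x$-term is linear, hence absorbable for small $R$.
\end{itemize}

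Finally, your derivation of the consequence only yields $g(x,|u|)\in L^{1^*/\sigma}_{\rm loc}$. That gives $g^\sigma(x,|u|)\in L^1_{\rm loc}$ only when $\sigma^2\le 1^*$.
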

\begin{proof}
   We define the truncation function
at height $k>0$ as $T_k(t)=\min \{t,k\}$. 
For $x_0\in \Omega_0$, let $0<R\leq \min\{\rm{dist}(x_0,\partial\Omega_0),1\}$. 
Since $T_k(|u-u_R|)\in W^{1,1}(B_R)$ and $0\le T_k(|u-u_R|)\le k$, Lemma \ref{Remark2.6DallAglioMascolo} yields
\[
g(x,T_k(|u-u_R|))\in W^{1,1}(B_R),
\]
and
\[
D(g(x,T_k(|u-u_R|)))=g_x(x,T_k(|u-u_R|))+g_t(x,T_k(|u-u_R|))DT_k(|u-u_R|)
\qquad\text{a.e.\ in }B_R.
\]
In particular,
\[
|D(g(x,T_k(|u-u_R|)))|
\le |g_x(x,T_k(|u-u_R|))|+g_t(x,T_k(|u-u_R|))|DT_k(|u-u_R|)|.
\]

\noindent By Sobolev's embedding
$g\left(x, T_k(|u-u_R|)\right) \in L^{\frac{n}{n-1}}(B_R)$ and we have that
\begin{align*}
   &\left(\int_{B_R}g^{\frac{n}{n-1}}(x, T_k(|u-u_R|))\, \dd x\right)^{\frac{n-1}{n}}  \\
   & \quad \leq  c(n) \left(\int_{B_R}|D(g(x,T_k(|u-u_R|))|\, \dd x + \int_{B_R} g(x,T_k(|u-u_R|))\, \dd x \right)\\
   & \quad \leq c(n)\Bigg(\int_{B_R}g_t(x,T_k(|u-u_R|))|DT_k(|u-u_R|)| \, \dd x\\
   & \quad \quad+ \int_{B_R} |g_x(x,T_k(|u-u_R|))|\, \dd x+ \int_{B_R}|g(x,T_k(|u-u_R|))| \, \dd x \Bigg)\\
   & \quad =:c(n) \,(I_1+I_2+I_3).
\end{align*}
Let $w:=T_k(|u-u_R|))$. Using Young–Fenchel inequality \eqref{YoungFenchel1} and \eqref{YoungFenchel2}, we have that
\begin{align}\label{estimategt}
g_t(x,w)\,|Dw| &\leq g^*(g_t(x,w))+g(x,|Dw|)\nonumber\\
&\leq w g_t(x,w)+g(x,|Dw|)\nonumber\\
&\leq C g^\sigma (x,w)+g(x,|Dw|).
\end{align}
By \eqref{estimategt}, we get the following estimate for the integral  $I_1$:
\begin{align*}
    I_1&\leq C\int_{B_R} g^{\sigma}(x, T_k(\vert u-u_R\vert))\dd x+ \int_{B_R} g(x, |DT_k(\vert u-u_R\vert)|)\dd x\\
&\leq C\vert B_R\vert^{\sigma\frac{n-1}{n}}\left(\int_{B_R}g^{\frac{n}{n-1}}(x, T_k(\vert u-u_R\vert))\dd x\right)^{\frac{n-1}{n}}+\int_{B_R}g(x, |DT_k(\vert u-u_k\vert )|)\dd x.
\end{align*}
Now, using hypothesis \eqref{H_3'}, we estimate $I_2$ as
\begin{align}\label{I3immersion}
    I_2&\leq C\int_{B_R}\gamma(x)\left(g^{1+\nu}(x,T_k(\vert u-u_R\vert))+1\right)\dd x \nonumber \\
    &\leq C \int_{B_R}\gamma(x)\dd x+C\Vert \gamma\Vert_{L^s(\Omega_0)}\left(\int_{B_R}g^{\frac{(1+\nu)s}{s-1}}(x,T_k(\vert u-u_R\vert))\dd x\right)^{\frac{s-1}{(1+\nu)s}}.
\end{align}
Now we can find an opportune $\nu$ such that $$\frac{(1+\nu)s}{s-1}<\frac{n}{n-1}.$$
Indeed, 
$$
\frac{(1+\nu)s}{s-1}<\frac{n}{n-1}\iff 1+\nu<\frac{n(s-1)}{s(n-1)}\iff\nu<\frac{n(s-1)}{s(n-1)}-1,
$$
which is true by the condition on $\nu$ in assumption \eqref{H_3'}.
We note that $$\nu<\frac{n(s-1)}{s(n-1)}-1<1$$ since $n\geq 2$. Indeed,
$$\frac{n(s-1)}{s(n-1)}-1<1 \iff s(n-2)+n>0.$$
We also notice that, being $s>pn>n$ by hypothesis \eqref{H_3'}, we have
$$
\frac{n(s-1)}{s(n-1)}-1= \frac{s-n}{s(n-1)}>0.
$$
Thus, the last factor in the right hand side of inequality \eqref{I3immersion} can be estimated as follows
\begin{align*}
&\left(\int_{B_R}g^{\frac{(1+\nu)s}{s-1}}(x,T_k(\vert u-u_R\vert))\dd x\right)^{\frac{s-1}{(1+\nu)s}}\\
&\quad \leq\left(\int_{B_R}g^
   {\frac{n}{n-1}}(x,T_k(\vert u-u_R\vert))\dd x\right)^{\frac{n-1}{n}}\vert B_R\vert^{\frac{s-1}{(1+\nu)s}-\frac{n-1}{n}}.
\end{align*}
Finally, we estimate the third integral $I_3$ by using H\"{o}lder inequality:
\begin{equation*}
    I_3\leq \vert B_R\vert^{\frac{1}{n}}\left(\int_{B_R}g^{\frac{n}{n-1}}(x, T_k(\vert u-u_R\vert))\dd x\right)^{\frac{n-1}{n}}.
\end{equation*}
By putting together the above estimates on the three integrals, we have proved that
\begin{align*}
    &\left(\int_{B_R}g^{\frac{n}{n-1}}(x,T_k(|u-u_R|))\, \dd x\right)^{\frac{n-1}{n}}\\
    &\quad\leq c(n) \Bigg(C|B_R|^{\sigma\frac{ n-1}{n}}\left(\int_{B_R}g^{\frac{n}{n-1}}(x,T_k(|u-u_R|))\, \dd x\right)^{\frac{n-1}{n}}+\int_{B_R}g(x, |DT_k(\vert u-u_k\vert )|)\dd x\\
    &\quad\quad+
    |B_R|^{\frac{1}{n}}\left(\int_{B_R}g^{\frac{n}{n-1}}(x,T_k(|u-u_R|))\, \dd x\right)^{\frac{n-1}{n}}\\
    &\quad \quad +\int_{B_R}\gamma(x)\dd x+\Vert \gamma\Vert_{L^s(\Omega_0)}\left(\int_{B_R}g^
   {\frac{n}{n-1}}(x,T_k(\vert u-u_R\vert))\dd x\right)^{\frac{n-1}{n}}\vert B_R\vert^{\frac{s-1}{(1+\nu)s}-\frac{n-1}{n}}\Bigg)\\
   &= c(n) \left(\int_{B_R}g^{\frac{n}{n-1}}(x,T_k(|u-u_R|))\, \dd x\right)^{\frac{n-1}{n}}\Bigg(C|B_R|^{\sigma\frac{ n-1}{n}}+
    |B_R|^{\frac{1}{n}}+\Vert \gamma\Vert_{L^s(\Omega_0)}\vert B_R\vert^{\frac{s-1}{(1+\nu)s}-\frac{n-1}{n}}\Bigg) \\
    &\quad \quad +c(n)\int_{B_R}g(x, |DT_k(\vert u-u_k\vert )|)\dd x+c(n) \int_{B_R}\gamma(x)\dd x\\
    &\quad \leq c_1 \left(\int_{B_R}g^{\frac{n}{n-1}}(x,T_k(|u-u_R|))\, \dd x\right)^{\frac{n-1}{n}}\Bigg(C|B_R|^{\sigma\frac{ n-1}{n}}+
    |B_R|^{\frac{1}{n}}+\Vert \gamma\Vert_{L^s(\Omega_0)}\vert B_R\vert^{\frac{s-1}{(1+\nu)s}-\frac{n-1}{n}}\Bigg) \\
    &\quad \quad +c_1\int_{B_R}g(x, |DT_k(\vert u-u_k\vert )|)\dd x,
\end{align*}
where in the last step we have used that  $c(n)\int_{B_R}\gamma(x)\dd x>0$ is constant.
By choosing $R_0$ sufficiently small, depending on $n$, $\sigma$, $s$ and $\Vert \gamma\Vert_{L^s(\Omega_0)}$, so that for $R\leq \min\{\rm{dist}(x_0, \partial\Omega_0),R_0\}$, we get that
$$C|B_R|^{\sigma\frac{ n-1}{n}}+
    |B_R|^{\frac{1}{n}}+\Vert \gamma\Vert_{L^s(\Omega_0)}\vert B_R\vert^{\frac{s-1}{(1+\nu)s}-\frac{n-1}{n}} \leq \frac{1}{2c_1}.$$
As a consequence, we have that 
\begin{equation*}
   \left(\int_{B_R}g^{\frac{n}{n-1}}(x,T_k(|u-u_R|))\, \dd x\right)^{\frac{n-1}{n}} \leq 2c_1  \int_{B_R}g(x, |DT_k(\vert u-u_k\vert )|)\dd x,
\end{equation*}
and by letting $k$ go to infinity, we obtain \eqref{14DallAglioMascolo}.
\end{proof}

\section{Proof of the main theorem}\label{sec:Proof of the main theorem}\label{Section4} 
\begin{proof}[Proof of Theorem \ref{boundedness theorem}]
 Let $u$ be a local minimizer of $\mathcal{F}$ there exists $\tau$ such that:
\[
\int_{\Omega} g(x,\tau |Du|)\,\dd x < \infty
\quad \text{for some } \tau>1.
\]

\noindent Then, by Theorem \ref{Euler-LagrangeTheorem}, $u$ satisfies the weak form of the Euler system,
namely 
\begin{equation}\label{1.6}
\int_{\Omega}
\sum_{i=1}^n F_i^\alpha(x,Du)\, \Phi_{x_i}^\alpha \, \dd x = 0,
\qquad \alpha=1,\dots,N.
\end{equation}\noindent for all $\Phi \in V.$\\

 Let  $\varphi$ be satisfy the following properties:
\begin{itemize}
    \item [$(\varphi_1)$] $\varphi:\Omega \times [0,+\infty) \to [0,+\infty)$ is a bounded Caratheodory function;
    \item [$(\varphi_2)$] the derivative $\varphi_t(x, t)$ with respect to $t$ is a nonnegative bounded Caratheodory
function such that $\varphi_t(x, t)t$ is bounded, \(\varphi(x,t)\) and \(\varphi_t(x,t)t\) are bounded by a constant \(M>1\);
    \item [$(\varphi_3)$] for all $t\geq 0$, $\varphi(\cdot, t)$ has weak derivatives in $L^1_{\rm{loc}}(\Omega)$, which are Caratheodory function on $\Omega \times [0,+\infty)$, and are such that there exists $\omega \in L^s_{\rm{loc}}(\Omega)$, with $s>pn$ such that 
    \begin{equation}\label{bound_varphi_x}
       |\varphi_x(x,t)|\leq \omega(x) \quad \mbox{for every $t \geq 0$, for a.e. $x\in \Omega$.}
    \end{equation}
    It is not restrictive to suppose that $\omega(x)>1$ for a.e. $x\in \Omega$.
\end{itemize}

\noindent  Define the function $\Phi$:
\[
\Phi^\alpha(x) = \frac{\delta}{M}u^\alpha \, \varphi(x,|\delta u|)\, \eta^p, 
\]
where $0<\delta<1$ opportunely chosen later in the proof, 
$\eta \in C_0^1(B_R)$ satisfies 
$$
0\le \eta \le 1,\quad
\eta \equiv 1 \mbox{ in $B_\rho$}, \quad |D\eta| \le \frac{1}{R-\rho},
$$
\noindent By definition, $\Phi \in W^{1,1}(B_R)$ and the chain rule of differentiation gives the following derivatives of $\Phi^\alpha$:
\begin{align}\label{derivativePhi}
\Phi^\alpha_{x_i}
=&
\frac{1}{M}\big(p\,\eta^{p-1}(\partial_{x_i}\eta)\,\varphi(x,\delta| u|)\,\delta u^\alpha
+
\eta^p\,\varphi(x,\delta| u|)\,\delta u^\alpha_{x_i}\nonumber \\
&+\eta^p\,\varphi_t(x,\delta | u|)
\frac{\delta^2 u^\alpha}{| u|}
\sum_{\beta=1}^N  u^\beta  u^\beta_{x_i}+
\eta^p\,\varphi_{x_i}(x,\delta | u|)\,\delta u^\alpha\big).
\end{align}
\noindent We claim that \(g(x,|D\Phi|)\in L^1_{\rm{loc}}(\Omega)\). Indeed, by \eqref{derivativePhi}, recalling that
\(\varphi(x,t)\) and \(\varphi_t(x,t)t\) are bounded by a constant \(M>1\), using that
\(0\leq \eta\leq 1\) and \eqref{bound_varphi_x}, we have
\begin{align*}
|D\Phi|
&\leq
 \frac{1}{M}p\,\eta^{p-1}|D\eta|\,\varphi(x,\delta |u|)\,\delta |u|
+\frac{1}{M}\eta^p\varphi(x,\delta |u|)\,\delta |Du| \\
&\quad
+\frac{1}{M}\eta^p\varphi_t(x,\delta |u|)\delta^2 |u|\,|Du|
+\frac{1}{M}\eta^p|\varphi_x(x,\delta |u|)|\,\delta |u|  \\
&\leq
\frac{p\delta}{R-\rho}|u|
+2\delta |Du|
+\frac{1}{M}\omega(x)\delta |u|  \\
&=: A_1+A_2+A_3 .
\end{align*}
Hence, using \eqref{moon} first with parameter \(2\), and then with the parameter \(\tau>1\), we obtain
\begin{align}\label{goodtestfunction}
g(x,|D\Phi|)
&\leq
\frac12 g\bigl(x,2(A_1+A_2)\bigr)
+\frac12 g(x,2A_3)\nonumber \\
&\leq
\frac{1}{2\tau}g(x,4\tau \delta |Du|)
+\frac12\frac{\tau-1}{\tau}
g\left(x,\frac{2\tau}{\tau-1}\frac{p\delta}{R-\rho}|u|\right)
+\frac12 g\left(x,\frac{1}{M}2\delta\omega(x)|u|\right).
\end{align}

\noindent We now estimate the three terms on the right-hand side. By assumption, there exists
\(\tau>1\) such that
\[
\int_{\Omega} g(x,\tau |Du|)\,\dd x<\infty.
\]
We choose
\begin{equation}\label{condition1delta}
0<\delta\leq \frac{1}{4}.
\end{equation}
Then \(4\delta\tau\leq \tau\), and by the monotonicity of \(g(x,\cdot)\),
\[
g(x,4\tau \delta |Du|)
\leq g(x,\tau |Du|).
\]
Therefore,
\[
g(x,4\tau \delta |Du|)\in L^1_{\rm{loc}}(\Omega).
\]

\noindent We next estimate the second term. 
Using \eqref{ipotesi1} we have
$$g\left(x,\frac{2\tau}{\tau-1}\frac{p\delta}{R-\rho}|u|\right) \leq g\left(x,\left(1+\frac{2\tau}{\tau-1}\frac{p\delta}{R-\rho}\right)|u|\right)\leq \left(1+\frac{2\tau}{\tau-1}\frac{p\delta}{R-\rho}\right)^p\, g^\sigma(x,|u|).$$

\noindent Since, by Theorem \ref{Immersion Theorem},
\[g^\sigma(x,|u|)\in L^1_{\mathrm{loc}}(\Omega),
\]
we obtain
\[
g\left(x,\frac{2\tau}{\tau-1}\frac{p\delta}{R-\rho}|u|\right)
\in L^1_{\rm{loc}}(\Omega).
\]

\noindent Lastly, we estimate the term $g\left(x,\frac{1}{M}2\delta\omega(x)|u|\right)$. We apply hypothesis \eqref{ipotesi1} and we get

$$g\left(x,\frac{1}{M}2\delta\omega(x)|u|\right)\leq 1+\frac{1}{M}2\delta\omega(x))^p \, g^\sigma (x,|u|)$$

\noindent By assumption,
\(\omega\in L^s_{\mathrm{loc}}(\Omega)\), with \(s>pn\), and therefore
$\omega^p\in L^{\frac{s}{p}}_{\rm{loc}}(\Omega)$. By Hölder's inequality,
\[
\int_{B_R}\left((1+\frac{1}{M}2\delta\omega(x)\right)^pg^\sigma(x,|u|)\,\dd x
\leq
\left(\int_{B_R}\left(1+\frac{1}{M}2\delta\omega(x)\right)^s\,\dd x\right)^{\frac{p}{s}}
\left(\int_{B_R}\left(g^\sigma(x,|u|)\right)^{\frac{s}{s-p}}\,\dd x\right)^{\frac{s-p}{s}}.
\]
Thus, it is enough to have $g^\sigma(x,|u|)
\in L^{\frac{s}{s-p}}_{\rm{loc}}(\Omega)$,
or equivalently,
\begin{equation}\label{tobeiintegrable}
g(x,|u|)
\in L^{\sigma\frac{s}{s-p}}_{\rm{loc}}(\Omega). \end{equation}
By Theorem \ref{Immersion Theorem}, $g(x,|u|)\in L^{\frac{n}{n-1}}(B_R)$,
hence \eqref{tobeiintegrable} is satisfied provided that
\[
\sigma\frac{s}{s-p}\leq \frac{n}{n-1},
\]
that is true by hypothesis \eqref{boundsigma}.\\

\noindent Under this condition on $\sigma$, we conclude $g\left(x,\frac{1}{M}2\delta\omega(x)|u|\right)
\in L^1_{\rm{loc}}(\Omega)$.

\noindent We have shown that each term on the right-hand side of \eqref{goodtestfunction} belongs to \(L^1_{\rm{loc}}(\Omega)\). Hence
\[
g(x,|D\Phi|)\in L^1_{\rm{loc}}(\Omega),
\]
that is \(\Phi\) is an admissible test function in \eqref{1.6}.\\

\noindent Taking into account that
\[
F^\alpha_i(x,\xi)
=
\frac{\partial F(x,\xi)}{\partial \xi^\alpha_i}
=
g_t(x,|\xi|)\,\frac{\xi^\alpha_i}{|\xi|},
\]
we may write, directly in terms of $Du$,
\[
F^\alpha_i(x,Du)
=
g_t(x,|Du|)\,\frac{u^\alpha_{x_i}}{|Du|}.
\]

\noindent Finally, summing over $\alpha$ in \eqref{1.6}, we can rewrite the Euler
system as
\begin{equation}\label{1.7}
0 = \frac{1}{M} (I_1 + I_2 + I_3+I_4), \quad \mbox{ that is }
0 = I_1 + I_2 + I_3+I_4 ,
\end{equation}
where
\begin{align*}
I_1
&=
\int_{\Omega}
\sum_{i,\alpha}
\frac{g_t(x,|Du|)}{|Du|}
\,u^\alpha_{x_i}
\bigl(
p\,\eta^{p-1}\eta_{x_i}\,\delta u^\alpha\,\varphi(x,\delta| u|)
\bigr)\,\dd x ,
\\[0.3em]
I_2
&=
\int_{\Omega}
\sum_{i,\alpha}
\frac{g_t(x,|Du|)}{|Du|}
\,u^\alpha_{x_i}
\bigl(
\eta^p\,\delta u^\alpha_{x_i}\,\varphi(x,\delta |u|)
\bigr)\,\dd x ,
\\[0.3em]
I_3
&=
\int_{\Omega}
\sum_{i,\alpha}
\frac{g_t(x,|Du|)}{|Du|}
\,u^\alpha_{x_i}
\biggl(
\eta^p\,\delta^2 u^\alpha\,\varphi'(x,\delta |u|)
\frac{1}{|u|}
\sum_{\beta}
u^\beta u^\beta_{x_i}
\biggr)\,\dd x,
\\[0.3em]
I_4
&=
\int_{\Omega}
\sum_{i,\alpha}
\frac{g_t(x,|Du|)}{|Du|}
\,u^\alpha_{x_i}\, \biggl(\eta^p
\,\delta u^\alpha\,\varphi_{x_i}(x,\delta| u|)\biggr)\,\dd x.
\end{align*}
Rearranging, we obtain
\begin{align*}
I_1 + I_2 + I_3 + I_4
&=
\int_{\Omega}
p\delta\frac{g_t(x,|Du|)}{|Du|}
\,\eta^{p-1}\varphi(x,\delta |u|)
\sum_{i,\alpha}
\eta_{x_i} u^\alpha_{x_i} u^\alpha \,\dd x
\\
&\quad
+ \int_{\Omega}
\delta g_t(x,|Du|)\,|Du|\,
\eta^p \varphi(x,\delta |u|)\,\dd x
\\
&\quad
+
\int_{\Omega}
\eta^p\delta^2
\frac{g_t(x,|Du|)}{|Du|}
\,\varphi'(x,\delta |u|)
\frac{1}{| u|}
\sum_{i}
\left(
\sum_{\alpha} u^\alpha u^\alpha_{x_i}
\right)^2
\dd x\\
&\quad
+  \int_{\Omega}\eta^p\,\delta \,
\frac{g_t(x,|Du|)}{|Du|}
\,\sum_{i,\alpha} u^\alpha_{x_i}\, 
u^\alpha\,\varphi_{x_i}(x,\delta| u|)\,\dd x =0.
\end{align*}
For the third term we have
\begin{align*}
I_3
&=
\int_{\Omega}
\sum_{i,\alpha}
\frac{g_t(x,|Du|)}{|Du|}
\,u^\alpha_{x_i}
\biggl(
\eta^p\,\delta^2 u^\alpha\,\varphi'(x,\delta |u|)
\frac{1}{|u|}
\sum_{\beta} u^\beta u^\beta_{x_i}
\biggr)\,\dd x
\\
&=
\int_{\Omega}
\eta^p\delta^2
\frac{g_t(x,|Du|)}{|Du|}
\frac{\varphi'(x,\delta |u|)}{| u|}
\sum_{i}
\sum_{\alpha,\beta}
u^\alpha_{x_i} u^\alpha\,
u^\beta u^\beta_{x_i}\,\dd x
\\
&=
\int_{\Omega}
\eta^p\delta^2
\frac{g_t(x,|Du|)}{|Du|}
\frac{\varphi'(x,\delta |u|)}{|u|}
\sum_{i}
\left(
\sum_{\alpha} u^\alpha u^\alpha_{x_i}
\right)
\left(
\sum_{\beta}  u^\beta u^\beta_{x_i}
\right)\,\dd x
\\
&=
\int_{\Omega}
\eta^p\delta^2
\frac{g_t(x,|Du|)}{|Du|}
\frac{\varphi'(x,\delta |u|)}{|u|}
\sum_{i}
\bigl(|u|(|u|)_{x_i}\bigr)^2
\,\dd x .
\end{align*}
Since the integral $I_3$ is positive or equal to zero, by \eqref{1.7} we obtain
\[
0 = I_1 + I_2 + I_3 +I_4\ge I_1 + I_2+I_4,
\qquad\Rightarrow\qquad
I_2 = -(I_1+I_3+I_4) \le -I_1-I_4,
\]
and therefore
\[
I_2 \le |I_1|+|I_4|.
\]
We also note that
\[
\left|
\sum_{i,\alpha}\eta_{x_i}u^\alpha_{x_i}\delta u^\alpha
\right|
\le
\sum_{i,\alpha}|\eta_{x_i}|\,|u^\alpha_{x_i}|\,|\delta u^\alpha|
\le
|D\eta|\,|\delta u|\,|Du|,
\]
so that we have the following estimate for $I_1$
\begin{align*}
|I_1|
\le &
\int_{\Omega}
\frac{g_t(x,|Du|)}{|Du|}
\,p\,\eta^{p-1}\,\varphi(x,\delta |u|)
\left|
\sum_{i,\alpha}\eta_{x_i}u^\alpha_{x_i}\delta u^\alpha
\right|\,\dd x \\ \le &
\, p 
\int_{\Omega}
\frac{g_t(x,|Du|)}{|Du|}
\,\varphi(x,\delta |u|)\,\eta^{p-1}
|D\eta|\,|\delta u|\,|Du|\,\dd x
\end{align*}
Analogously, we can estimate $I_4$. Thus, we get
\begin{align}\label{1.8_2}
\delta\int_{\Omega} g_t(x,|Du|)\,|Du|\,\eta^p\,\varphi(x,\delta |u|)\,\dd x
\le & \,p
\int_{\Omega}
g_t(x,|Du|)\,\varphi(x,\delta |u|)\,\eta^{p-1}
|D\eta|\,|\delta u|\,\dd x \nonumber \\
&+\int_{\Omega}g_t(x,|Du|)\,|\varphi_x(x,\delta |u|)|\,\eta^{p} \,|\delta u|\,\dd x.
\end{align}

d moreover $\varphi_k'$ is bounded.
%
%
%

%

\medskip

\noindent Let now $\beta$ be a positive constant and let  $(h_k)_{k\in\mathbb{N}}\subset C^1([0,\infty))$ be a sequence of
nonnegative, bounded, increasing functions with respect to $k$ such that:
\begin{itemize}
\item $h_k$ is constant for $t$ large enough (so $h_k$ is bounded);
\item $h_k(t)\uparrow t^{\beta}$ for every $t\ge0$;
\item $h_k'(t)\uparrow \beta t^{\beta-1}$ for every $t>0$.
\end{itemize}

\noindent Define
\[
\varphi_k(x,t)=h_k(g(x,t)), \qquad t\ge0.
\]

\noindent Then each $\varphi_k\in C^1([0,\infty))$ is convex and increasing,
satisfies $\varphi_k(0)=0$, and $(\varphi_1)-(\varphi_3)$ has bounded derivative. Hence $\varphi_k$ is admissible in \eqref{1.6} and by 
\eqref{1.8_2} with $\varphi=\varphi_k$  we get
\begin{align*}
\delta\int_{\Omega} \eta^p \, g_t(x,|Du|)\,|Du|\,h_k\bigl(g(x,|\delta u|)\bigr)\,\dd x
\le & \,p
\int_{\Omega}\eta^{p-1}\,
g_t(x,|Du|)\,h_k\bigl(g(x,|\delta u|)\bigr)\,
|D\eta|\,|\delta u|\,\dd x \nonumber \\
&+\int_{\Omega}\eta^{p}\, g_t(x,|Du|)\,h'_k\bigl(g(x,|\delta u|)\bigr)\,|g_x(x,|\delta u|)|\,|\delta u|\,\dd x.
\end{align*}
Since
\[
h_k\bigl(g(x,|\delta u|)\bigr)\uparrow g^{\beta}(x,|\delta u|)
\quad\text{pointwise}
\]
and all integrands are nonnegative, we may let $k\to\infty$
and apply the Monotone Convergence Theorem and hypothesis \eqref{H_3'} to obtain
\begin{align}\label{1.9_2}
\delta\int_{\Omega} \eta^p \, g_t(x,|Du|)\,|Du|\,g^{\beta}(x,|\delta u|)\,\dd x
&\le  \,p
\int_{\Omega}\eta^{p-1}\,
g_t(x,|Du|)\,g^{\beta}(x,|\delta u|)\,
|D\eta|\,|\delta u|\,\dd x \nonumber \\
&\quad +C \, \beta \int_{\Omega}\eta^{p}\, g_t(x,|Du|)g^{\beta}(x,|\delta u|)\bigl(1+g^{\nu}(x,|\delta u|)\bigr)\,\gamma(x)\,|\delta u|\,\dd x\nonumber\\
&=A+B.
\end{align}
We start by estimating the first integral on the right hand side $A$.
We note that if $g:\mathbb{R}\times[0,\infty)\to[0,\infty)$ is a convex, increasing function, and 
$g^*$ is its convex conjugate,
\[
g^*(x,s) := \sup_{t\ge 0}\{st - g(x,t)\}.
\]
Then the Young-Fenchel inequality states that for all $a,b\ge 0$,
\begin{equation*}
ab \le g^*(\cdot,a) + g(\cdot,b).
\end{equation*}
A rescaled version is
\begin{equation}\label{YoungFenchel1}
ab \le \varepsilon g^*(\cdot,a) + \varepsilon g\!\left(\cdot,\frac{b}{\varepsilon}\right),
\qquad \varepsilon>0.
\end{equation}

\noindent Moreover, by the convexity of $g$, one has
\begin{equation}\label{YoungFenchel2}
g^*(x,g_t(x,t)) = t g_t(x,t) - g(x,t) \le t g_t(x,t).
\end{equation}
In particular,
\[
g^*(x,g_t(x,|Du|)) \le g_t(x,|Du|)\,|Du|.
\]

\noindent For $x$ such that $\eta(x)\neq 0$, multiplying 
\eqref{YoungFenchel2} by $\eta^p$ (used with $a=g_t(x,|Du|)$ and $b=|\delta u|\,|D\eta|/\eta$) and using the previous inequality, we obtain
\begin{align}\label{smileyFace}
  \eta^{p-1} g_t(x,|Du|)\,|\delta u|\,|D\eta|
&\le
\varepsilon\,\eta^p g^*\!\bigl(x,g_t(x,|Du|)\bigr)
+
\varepsilon\,\eta^p
g\!\left(x,\frac{|\delta u|\,|D\eta|}{\varepsilon\eta}\right)\nonumber\\  
&\le
\varepsilon\,\eta^p g_t(x,|Du|)\,|Du|
+
\varepsilon\,\eta^p
g\!\left(x,\frac{|\delta u|\,|D\eta|}{\varepsilon\eta}\right).
\end{align}
We now focus on the term $g\!\left(x,\frac{|\delta u|\,|D\eta|}{\varepsilon\eta}\right)$.
Since $|D\eta| \le \frac{1}{R-\rho}$, and because $g$ satisfies \eqref{ipotesi1}, we have
\[
g(x,\lambda t) \le C \lambda^p \bigl[1+g^\sigma(x,t)\bigr],
\qquad \mbox{for every $t>0$.} 
\]

\noindent Thus, choosing
\[
\lambda=\frac{|D\eta|}{\varepsilon\eta},
\qquad
t=|\delta u|,
\]
we obtain
\begin{align*}
\eta^p g\!\left(x,\frac{|\delta u|\,|D\eta|}{\varepsilon\eta}\right)
&\le
\eta^p\left(\frac{C}{(R-\rho)^p}\,\varepsilon^{-p}\eta^{-p}
\bigl[1+g^\sigma(x,|\delta u|)\bigr]\right)\\
&=\frac{C}{(R-\rho)^p}\,\varepsilon^{-p}
\bigl[1+g^\sigma(x,|\delta u|)\bigr].
\end{align*}

\medskip

\noindent Returning to \eqref{smileyFace}, we conclude that
\[
\eta^{p-1} g_t(x,|Du|)\,|\delta u|\,|D\eta|
\le
\varepsilon\,\eta^p g_t(x,|Du|)\,|Du|
+
\frac{C\,\varepsilon^{1-p}}{(R-\rho)^p}
\bigl[1+g^\sigma(x,|\delta u|)\bigr].
\]
Thus, we get 
\begin{align}\label{estimateA}
A:=p
\int_{B_R}\eta^{p-1}\,
g_t(x,|Du|)\,g^{\beta}(x,|\delta u|)\,
|D\eta|\,|\delta u|\,\dd x \leq & \, p \, \varepsilon \int_{B_R} g^{\beta}(x,|\delta u|)\,\eta^p g_t(x,|Du|)\,|Du|\,\dd x\nonumber \\ &+ p\, \frac{C\,\varepsilon^{1-p}}{(R-\rho)^p}  \int_{B_R} g^{\beta}(x,|\delta u|)\, 
\bigl[1+g^\sigma(x,|\delta u|)\bigr]\,\dd x.
\end{align}
We now estimate the second integral on the right hand side \begin{align*}
    B&:=C \, \beta \int_{B_R}\eta^{p}\, g_t(x,|Du|)g^{\beta}(x,|\delta u|)\bigl(1+g^{\nu}(x,|\delta u|)\bigr)\,\gamma(x)\,|\delta u|\,\dd x.
    \end{align*}
We use \eqref{YoungFenchel1} with $a=g_t(x,|Du|)$ and $b=\bigl(1+g^{ \nu}(x,|\delta u|)\bigr)\,\gamma(x)|\delta u|$, we multiply it by $\eta^p$, so we obtain
\begin{align}\label{smileyFacebis}
  \eta^{p} g_t(x,|Du|)\bigl(1+g^{ \nu}(x,|\delta u|)\bigr)\,\gamma(x)|\delta u|
&\le
\varepsilon\,\eta^p g^*\!\bigl(x,g_t(x,|Du|)\bigr)
+
\varepsilon\,\eta^p
g\!\left(x,\frac{\bigl(1+g^{ \nu}(x,|\delta u|)\bigr)\,\gamma(x)|\delta u|}{\epsilon}\right)\nonumber\\  
&\le
\varepsilon\,\eta^p g_t(x,|Du|)\,|Du|
+
\varepsilon\,\eta^p
g\!\left(x,\frac{\bigl(1+g^{ \nu}(x,|\delta u|)\bigr)\,\gamma(x)|\delta u|}{\epsilon}\right).
\end{align}
We now focus on the term $g\!\left(x,\frac{\bigl(1+g^{ \nu}(x,|\delta u|)\bigr)\,\gamma(x)|\delta u|}{\epsilon}\right)$.
Since $g$ satisfies \eqref{ipotesi1}, we have
\[
g(x,\lambda t) \le C \lambda^p \bigl[1+g^\sigma(x,t)\bigr],
\qquad \mbox{for every $t>0$.} 
\]
Thus, choosing
\[
\lambda=\frac{\bigl(1+g^{ \nu}(x,|\delta u|)\bigr)\,\gamma(x)}{\epsilon},
\qquad
t=|\delta u|,
\]
we obtain
\[
g\!\left(x,\frac{\bigl(1+g^{ \nu}(x,|\delta u|)\bigr)\,\gamma(x)|\delta u|}{\epsilon}\right)
\le C\left(\frac{\bigl(1+g^{ \nu}(x,|\delta u|)\bigr)\,\gamma(x)}{\epsilon}\right)^p
\bigl[1+g^\sigma(x,|\delta u|)\bigr].
\]
Returning to \eqref{smileyFacebis}, we have
\begin{align*}
 \eta^{p} g_t(x,|Du|)\bigl(1+g^{ \nu}(x,|\delta u|)\bigr)\,\gamma(x)|\delta u|
&\leq
\varepsilon\,\eta^p g_t(x,|Du|)\,|Du|\\
&\quad+
\varepsilon\,\eta^p C\left(\frac{\bigl(1+g^{ \nu}(x,|\delta u|)\bigr)\,\gamma(x)}{\epsilon}\right)^p
\bigl[1+g^\sigma(x,|\delta u|)\bigr]\\
&=
\varepsilon\,\eta^p g_t(x,|Du|)\,|Du|\\
&\quad+
\varepsilon^{1-p}\,\eta^p C\bigl(1+g^{ \nu}(x,|\delta u|)\bigr)^p\,\gamma^p(x)
\bigl[1+g^\sigma(x,|\delta u|)\bigr].
\end{align*}
Then we deduce the following estimate for integral $B$
\begin{align}\label{estimateB}
B&\leq C \, \beta \varepsilon\,\int_{B_R} \eta^p g^{\beta}(x,|\delta u|)\,g_t(x,|Du|)\,|Du|\, \dd x \nonumber \\
&\quad +C \, \beta \varepsilon^{1-p}\int_{B_R} \eta^p g^{\beta}(x,|\delta u|)\,\bigl(1+g^{ \nu}(x,|\delta u|)\bigr)^p\,\gamma^p(x)
\bigl[1+g^\sigma(x,|\delta u|)\bigr]\, \dd x.
\end{align}
Putting together \eqref{1.9_2},\eqref{estimateA} and \eqref{estimateB}, we obtain
\begin{align*}
\delta\int_{B_R} \eta^p \, g_t(x,|Du|)\,|Du|\,g^{\beta}(x,|\delta u|)\,\dd x
&\le \, p \, \varepsilon \int_{B_R}\eta^p g^{\beta}(x,|\delta u|)\, g_t(x,|Du|)\,|Du|\,\dd x\nonumber \\ 
& \quad + p\, \frac{C\,\varepsilon^{1-p}}{(R-\rho)^p}  \int_{B_R} g^{\beta}(x,|\delta u|)\, 
\bigl[1+g^\sigma(x,|\delta u|)\bigr]\,\dd x\nonumber \\
&\quad+C \, \beta \varepsilon\,\int_{B_R} \eta^p g^{\beta}(x,|\delta u|)\,g_t(x,|Du|)\,|Du|\, \dd x \nonumber \\
&\quad +C \, \beta \varepsilon^{1-p}\, C\int_{B_R} \eta^p g^{\beta}(x,|\delta u|)\,\bigl(1+g^{ \nu}(x,|\delta u|)\bigr)^p\,\gamma^p(x)
\bigl[1+g^\sigma(x,|\delta u|)\bigr]\, \dd x.
\end{align*}
Thus, by choosing opportunely $\epsilon$ to absorb the first and the third addendum of the right hand side into the left hand side and recalling that $0\leq \eta\leq1$, we get
\begin{align}\label{(24)DallAglioMascolo}
\int_{B_R} \delta \, \eta^p \, g_t(x,|Du|)\,|Du|\,g^{\beta}(x,|\delta u|)\,\dd x
&\le \frac{c}{(R-\rho)^p} \int_{B_R}   \gamma^p(x)\,\bigl(1+g^{ \nu p +\beta+\sigma}(x,|\delta u|)\bigr)\, \dd x.
\end{align}
where $c$ depends on $\nu$, $\beta$, $p$, $\sigma$, $\delta$.\\

\noindent Let us define 
\begin{equation}\label{definition_w}
    w := \eta^p \bigl[g^{\beta+1}(x,|\delta u|)+1\bigr].
\end{equation}
Formally, computing the derivative, we obtain
\begin{align}\label{2.5DallAglioMascolo}
w_{x_i}=& \,p\eta^{p-1}\eta_{x_i}\bigl[g^{\beta+1}(x,|\delta u|)+1\bigr]+(\beta +1) \eta^p g^{\beta}(x,|\delta u|) g_{x_i}(x,|\delta u|)\nonumber\\&+ (\beta+1) \eta^p g^{\beta}(x,|\delta u|) \frac{g_t(x,|\delta u|)}{|u|}\,\delta\,\sum_{\alpha=1}^N u^\alpha u_{x_i}^\alpha.
\end{align}
Assuming that the right-hand side belongs to $L^1(\Omega)$, we conclude that $w\in W^{1,1}_0(\Omega)$ and that the above expression coincides with its weak gradient.\\


\noindent Using \eqref{2.5DallAglioMascolo}, and in particular hypothesis \eqref{H_3'} to estimate the second addendum on the right hand side of \eqref{2.5DallAglioMascolo}, we get
\begin{align}\label{doubleSmile}
\int_{B_\rho} |Dw|\,\dd x
\le &
\,p\int_{B_R}
\eta^{p-1}[g^{\beta+1}(x,|\delta u|)+1]|D\eta|\,\dd x 
+ (\beta+1) C \int_{B_R} \eta^p \gamma(x) [g^{\beta+1+\nu}(x,|\delta u|)+1]\,\dd x
\nonumber\\
&+ 
(\beta+1)
\int_{B_R}
\eta^p g^{\beta}(x,|\delta u|)
\,\delta\,|Du|\, g_t(x,|\delta u|)\,\dd x.
\end{align}
Now, we want to estimate the third term on the right-hand side of inequality \eqref{doubleSmile}.
We observe that, again by standard inequality $g_t(\cdot,t_1)t_2\leq g_t(\cdot,t_1)t_1+g_t(\cdot,t_2)t_2$, we have that
\begin{equation*}
g_t(x,|\delta u|)|Du| \le
g_t(x,|\delta u|)|\delta u|
+
g_t(x,|Du|)|Du|.
\end{equation*}
Then the third integral on the right hand side of \eqref{doubleSmile} can be estimated as
\begin{align}\label{estimate_time_derivative_term}
\int_{B_R}
 \eta^p g^{\beta}(x,|\delta u|)
\,\delta\,|Du|\, g_t(x,|\delta u|)\,\dd x
\le &
\int_{B_R}
 \eta^p g^{\beta}(x,|\delta u|)
\,\delta g_t(x,|\delta u|)|\delta u|\,\dd x\nonumber\\
&+
\int_{B_R}
 \eta^p g^{\beta}(x,|\delta u|)
\,\delta\, g_t(x,|Du|)|Du|\,\dd x .
\end{align}
Applying hypothesis \eqref{oldH2} and \eqref{(24)DallAglioMascolo} to the first and the second integral on the right hand side of \eqref{estimate_time_derivative_term}, respectively, and recalling that $\eta\leq 1$,
we get
\begin{align*}
\int_{B_R}
\eta^p g^{\beta}(x,|\delta u|)\,\delta\, |Du|\,g_t(x,|\delta u|)\,\dd x
&\le
C
\int_{B_R}
\bigl[1+g^{\sigma+\beta}(x,|\delta u|)\bigr]\,\dd x\\
&\quad+\frac{c}{(R-\rho)^p}
\int_{B_R}\gamma^p(x)
\bigl[1+g^{\nu p+\beta +\sigma}(x,|\delta u|)\bigr]\,\dd x\\
&\le
\frac{c}{(R-\rho)^p}\int_{B_R}
\gamma^p(x)\bigl[1+g^{\nu p+\beta +\sigma}(x,|\delta u|)\bigr]\,\dd x,
\end{align*}
where $c$ depends on depends on  $\nu$, $\beta$, $p$, $\sigma$, $\delta$.
Returning to \eqref{doubleSmile} and recalling that $\eta\leq 1$ and $|D\eta|\leq 1/(R-\rho)$, we obtain
\begin{align}\label{Dw_estimate}
\int_{B_\rho} |Dw|\,\dd x
&\le 
\frac{c}{(R-\rho)^p}
\int_{B_R}\gamma^p(x)
\bigl[1+g^{\nu p+\beta +\sigma}(x,|\delta u|)\bigr]\,\dd x.
\end{align}
By Sobolev’s Embedding Theorem, \eqref{Dw_estimate} becomes
\begin{equation*}
\left(
\int_{B_\rho} |w|^{1^*}\,\dd x
\right)^{\frac{1}{1^*}}
\le
\int_{B_\rho} |Dw|\,\dd x
\le
\frac{c}{(R-\rho)^p}
\int_{B_R}\gamma^p(x)
\bigl[1+g^{\nu p+\beta +\sigma}(x,|\delta u|)\bigr]\,\dd x,
\end{equation*}
where $1^*=n/(n-1)$.
Recalling the definition of $w$ \eqref{definition_w}, the previous inequality can be rewritten as
\begin{equation}\label{(26)DallAglioMascolo}
\left(
\int_{B_\rho} \bigl[1+g^{\beta+1}(x,|\delta u|)\bigr]^{1^*}\,\dd x
\right)^{\frac{1}{1^*}}
\leq \frac{c}{(R-\rho)^p}
\int_{B_R}\gamma^p(x)
\bigl[1+g^{\nu p+\beta +\sigma}(x,|\delta u|)\bigr]\,\dd x.
\end{equation}
Since
\[
\bigl[1+g^{\beta+1}(|\delta u|)\bigr]^{1^*}
\geq
c \bigl[1+g(x,|\delta u|)\bigr]^{(\beta+1) 1^*},
\]
we obtain
\begin{equation}\label{A1}
\left(
\int_{B_\rho}
\bigl[1+g(x,|\delta u|)\bigr]^{(\beta+1) 1^*}
\,\dd x
\right)^{\frac{1}{1^*}}
\leq
\frac{c}{(R-\rho)^p}
\int_{B_R}\gamma^p(x)
\bigl[1+g^{\nu p+\beta +\sigma}(x,|\delta u|)\bigr]\,\dd x.
\end{equation}
Let us focus on the integral in the right-hand side of the previous inequality. Using H\"older inequality with exponents $\frac{s}{p}$ and $\frac{s}{s-p}$, we obtain
\begin{align}\label{second termwithgamma}
    \int_{B_R}  \gamma^p(x) \bigl[1+g^{\nu p+\beta +\sigma}(x,|\delta u|)\bigr]\,\dd x &\leq \left(\int_{B_R} \gamma^s(x) \,\dd x\right)^{\frac{p}{s}} \left(\int_{B_R} \bigl[1+g^{\nu p+\beta +\sigma}(x,|\delta u|)\bigr]^{\frac{s}{s-p}} \,\dd x\right)^{\frac{s-p}{s}}\nonumber\\
    &\leq \left(\int_{\Omega_0} \gamma^s(x) \,\dd x\right)^{\frac{p}{s}}  \left(\int_{B_R} \bigl[1+g^{\nu p+\beta +\sigma}(x,|\delta u|)\bigr]^{\frac{s}{s-p}} \,\dd x\right)^{\frac{s-p}{s}}\nonumber\\
    &= c \left(\int_{B_R} \bigl[1+g^{\nu p+\beta +\sigma}(x,|\delta u|)\bigr]^{\frac{s}{s-p}} \,\dd x\right)^{\frac{s-p}{s}},
\end{align}
where in the last step we have used that $\gamma\in L^s_{\rm{loc}}(\Omega)$ and $c=\|\gamma\|^p_{L^s(\Omega_0)}$.\\

\noindent We observe that
\[
\bigl[1+g^{\nu p+\beta +\sigma}(x,|\delta u|)\bigr]
\leq \bigl[1+g^{\beta}(x,|\delta u|)\bigr]
\bigl[1+g^{\nu p+\sigma}(x,|\delta u|)\bigr].
\]
Let \begin{equation}\label{theta}
    \frac{s}{s-p}<\theta<1^*
\end{equation} to be choosen opportunely lately, again by H\"older's inequality with exponents $\frac{\theta(s-p)}{s}$ and $\frac{\theta(s-p)}{\theta(s-p)-s}$,
we have 
\begin{align*}
\int_{B_R} &\bigl[1+g^{\nu p+\beta +\sigma}(x,|\delta u|)\bigr]^{\frac{s}{s-p}} \,\dd x \\
&\leq \left(\int_{B_R}\bigl[1+g^{\beta}(x,|\delta u|)\bigr]^{\theta}\,\dd x\right)^{\frac{s}{\theta(s-p)}} \, \left(\int_{B_R}\bigl[1+g^{\nu p+\sigma}(x,|\delta u|)\bigr]^{\frac{s\theta}{\theta(s-p)-s}}\,\dd x\right)^{1-\frac{s}{\theta(s-p)}}.
\end{align*}
Since $\sigma$ is strictly less than $1^*$, there exists $0<\epsilon<1$ such that $\nu p + \sigma= 1^* \epsilon$. We now choose $\theta$ depending on $\epsilon$ as $$\theta=\frac{s}{(1-\epsilon)s-p}.$$
We point out that this choice is admissible since it satisfies \eqref{theta}. Indeed, on the one hand $$\theta=\frac{s}{(1-\epsilon)s-p} > \frac{s}{s-p}$$ trivially because $s-p>(s-p)-s\epsilon$. On the other hand, to get $$\theta=\frac{s}{(1-\epsilon)s-p} < 1^*,$$ that is $s<1^*(s-p)-1^*\epsilon s$, it's enough to choose $$\epsilon< \frac{1^*(s-p)-s}{1^* s}<1.$$
We remark that  $$\frac{1^*(s-p)-s}{1^* s}>0$$ because, by hypothesis, we are assuming $s>pn$ which implies that $1^*(s-p)-s>0$.
With this choice of $\theta$, since $(\nu p +\sigma)=1^* \epsilon$, that is $(\nu p +\sigma) \frac{1}{\epsilon}=1^*$, we have that \begin{equation}\label{(27)DallAglioMascolo}
\frac{(\nu p+\sigma)s\theta}{\theta(s-p)-s} =1^*,
\end{equation}
and we get
\begin{align*}
    \int_{B_R} & \gamma(x) \bigl[1+g^{\nu p+\beta +\sigma}(x,|\delta u|)\bigr]\,\dd x \\ &\leq c \left(\int_{B_R}\bigl[1+g^{\beta}(x,|\delta u|)\bigr]^{\theta}\,\dd x\right)^{\frac{1}{\theta}} \, \left(\int_{B_R}\bigl[1+g^{1^*}(x,|\delta u|)\bigr]\,\dd x\right)^{\frac{s-p}{s}-\frac{1}{\theta}},
\end{align*}
where $c$ depends on $s$, $\theta$ and $\|\gamma\|_{L^s(\Omega_0)}$.\\

\noindent By using the standard properties $1+t^{1^*}\leq (1+t)^{1^*}$ and $(1+t)^\theta\leq 2^{\theta-1}(1+t^\theta)$, inequality \eqref{(26)DallAglioMascolo}
becomes
\begin{align}\label{(28)DallAglioMascolo}
\Bigg(
\int_{B_\rho} &\bigl[1+g^{1^*b}(x,|\delta u|)\bigr]\,\dd x
\Bigg)^{\frac{1}{1^*}}\nonumber\\
&\leq \frac{c\,b }{(R-\rho)^p}
\left(\int_{B_R}\bigl[1+g^{\theta b}(x,|\delta u|)\bigr]\,\dd x\right)^{\frac{1}{\theta}} \, \left(\int_{B_R}\bigl[1+g^{1^*}(x,|\delta u|)\bigr]\,\dd x\right)^{\frac{s-p}{s}-\frac{1}{\theta}},
\end{align}
where $b:=\beta+1$, $c$ depends on $C$, $s$, $\theta$ and $\|\gamma\|_{L^s(\Omega_0)}$.
This holds by choosing opportunely $\delta<1$ such that $g(x,|\delta u|)\in L^{\theta b}(B_R)$.

\noindent The next step of the proof consists in an iteration procedure.
We start by fixing $\bar{R}$ and $\bar{\rho}$ such that $0<\bar{\rho}<\bar{R}\leq \min\{1, {\rm{dist}}(x_0,\partial \Omega_0)\}$. We define 
\begin{equation*}
    \rho_j=\bar{\rho}+ \frac{\bar{R}-\bar{\rho}}{2^{j-1}}, \qquad b_j=\left(\frac{1^*}{\theta}\right)^j, \qquad A_j=\left(\int_{B_{\rho_j}}\bigl[1+g^{\theta b_j}(x,|\delta u|)\bigr]\,\dd x\right)^{\frac{1}{\theta b_j}}, \quad \mbox{for every $j=1,2,...$.}
\end{equation*}
\noindent For $\rho=\rho_{j+1}$, $R=\rho_j$ and $b=b_j$, \eqref{(28)DallAglioMascolo} gives
\begin{equation}\label{(29)DallAglioMascolo} 
 A_{j+1}\leq  \left(\frac{cM}{\delta^{p-1}(\bar{R}-\bar{\rho})^p}\right)^{\frac{1}{b_j}}  b_j^{\frac{1}{b_j}} 2^{\frac{jp}{b_j}}A_j, \quad \mbox{with }  M=\left(\int_{\bar{R}}\bigl[1+g^{1^*}(x,|\delta u|)\bigr]\,\dd x\right)^{\frac{s-p}{s}-\frac{1}{\theta}}.
\end{equation}
By iterating \eqref{(29)DallAglioMascolo}, we obtain
\begin{equation*}
 A_{j+1}\leq  \left(\frac{cM}{\delta^{p-1}(\bar{R}-\bar{\rho})^p}\right)^{\sum\limits_{k=1}^j \frac{1}{b_k}}  \left(\prod_{k=1}^j b_k^{\frac{1}{b_k}}\right)  2^{p\sum\limits_{k=1}^j\frac{k}{b_k}}A_1.
\end{equation*}
We remark that Theorem \ref{Immersion Theorem} ensures that $M$, $A_1$ and therefore $A_j$, for every $j=1,2,...$, are finite. 
Since \begin{equation*}
   \sum\limits_{k=1}^\infty \frac{1}{b_k}=\frac{\theta}{1^*-\theta}, \quad \sum\limits_{k=1}^\infty \frac{k}{b_k}=\sum\limits_{k=1}^\infty k \left(\frac{\theta}{1^*}\right)^k, \quad \prod_{k=1}^\infty b_k^{\frac{1}{b_k}}= \rm{exp}\left(\sum\limits_{k=1}^\infty \frac{\ln{b_k}}{b_k}\right)<\infty,
\end{equation*}
we conclude that, for every $j=1,2,...$,
\begin{equation*}
    A_j \leq c \left(\frac{M}{\delta^{p-1}(\bar{R}-\bar{\rho})^p}\right)^{\frac{\theta}{1^*-\theta}}\left(\int_{\bar{R}}\bigl[1+g^{1^*}(x,|\delta u|)\bigr]\,\dd x\right)^{\frac{1}{1^*}},
\end{equation*}
where $c$ depends on $n$, $C$, $s$, $\theta$ and $\|\gamma\|_{L^s(\Omega_0)}$.
By definition of $M$ and noticing that $g$ is increasing and $\delta<1$, the previous inequality becomes
\begin{equation}\label{(30)DallAglioMascolo}
  A_j \leq c \left(\frac{1}{\delta^{\frac{(p-1)s(n-1)}{s-pn}}(\bar{R}-\bar{\rho})^{\frac{ps(n-1)}{s-pn}}}\left(\int_{\bar{R}}\bigl[1+g^{1^*}(x,| u|)\bigr]\,\dd x\right)^{\frac{1}{1^*}}\right)^\alpha, 
\end{equation}
where $\alpha=\alpha(\theta)=\frac{\theta(s-pn)}{s(n-\theta(n-1))}$ and $c$ depends on $\alpha$ (or, which is equivalent, on
$\theta$), $n$, $p$, $s$ and $\|\gamma\|_{L^s(\Omega_0)}$.
We note that, depending on the choice of $\theta$, the exponent $\alpha$ can assume any value greater than $1$.
Thus, we get
\begin{equation}\label{(31)_0DallAglioMascolo}
\sup\limits_{B_{\bar{\rho}}}g(x, |\delta u|)= \lim\limits_{j \to +\infty} \left(\int_{B_{\bar{\rho}}}g^{b_j \theta}(x, |u|)\, \dd x \right)^{\frac{1}{b_j \theta}} \leq \limsup\limits_{j \to +\infty} A_j.
\end{equation}
We now apply our hypothesis $\eqref{ipotesi1}$ on the right-hand side of \eqref{(31)_0DallAglioMascolo}, with $\lambda=\frac{1}{\delta}>1$ (since $\delta<1$), $t=\delta\vert u\vert$, to obtain

$$
g(x,\vert u\vert)=g\left(x,\frac{1}{\delta}(\delta\vert u\vert)\right)\leq c\delta^{-p}g^{\sigma}(x,\delta\vert u\vert),
$$ 

\noindent and so
\begin{equation}\label{(31)DallAglioMascolo}
\sup\limits_{B_{\bar{\rho}}}g(x, | u|)\leq c\delta^{-p}\left(\limsup\limits_{j \to +\infty} A_j\right)^\sigma.
\end{equation}
Replacing $\bar{R}$ and $\bar{\rho}$ with $R$ and $\rho$ respectively, the thesis \eqref{boundedness inequality} follows from \eqref{(30)DallAglioMascolo} and \eqref{(31)DallAglioMascolo}.
\end{proof}

\section*{Acknowledgement}
The authors wish to express their gratitude to Professor Paolo Marcellini for providing challenging food for thought for this research.

E. Mascolo and A. Nastasi are members of the \textit{Gruppo Nazionale per
l'Analisi Matematica, la Probabilit\`{a} e le loro Applicazioni} (GNAMPA) of
the Istituto Nazionale di Alta Matematica (INdAM).

\end{document}